\newtheorem{satz}{Theorem}[section]
\newtheorem{lemma}{Lemma}[section]
\newtheorem{bemerk1}{Remark}[section]
\newcommand{\iR}{\mathbb{R}}
\newcommand{\iN}{\mathbb{N}}
\newcommand{\oH}{\hspace*{0.39em}\raisebox{0.6ex}{\textdegree}\hspace{-0.72em}H}
\begin{document}
\begin{center}
{\bf\Large Bounded weak solutions of time-fractional porous medium type and more general nonlinear and degenerate evolutionary integro-differential equations}
\end{center}
\vspace{0.5em}
\begin{center}
Petra Wittbold, Patryk Wolejko and Rico Zacher
\end{center}
\begin{abstract}
We prove existence of a bounded weak solution to a degenerate quasilinear subdiffusion problem with bounded 
measurable coefficients that may explicitly depend on time. The kernel in the involved integro-differential operator w.r.t.\ time
belongs to the large class of  ${\cal PC}$ kernels. In particular, the case of a fractional time derivative of order less than
$1$ is included. A key ingredient in the proof is a new compactness criterion of Aubin-Lions type which involves function
spaces defined in terms of the integro-differential operator in time. Boundedness of the solution is obtained by the
De Giorgi iteration technique. Sufficiently regular solutions are shown to be unique by means of an $L_1$-contraction
estimate.
\end{abstract}
\vspace{0.7em}
\begin{center}
{\bf AMS subject classification:} 35R11, 45K05, 47G20
\end{center}

\noindent{\bf Keywords:} fractional time derivative, subdiffusion, porous medium type equation, weak solution, 
degenerate equation, monotonicity, boundedness, De Giorgi iteration, $L_1$-contraction 
\section{Introduction} \label{IntroSec}
Let $T>0$ and $\Omega$ be a bounded domain in $\iR^d$. We consider the problem

\begin{equation} \label{mainproblem}
\left\{
\begin{array}{r@{\;=\;}l@{\;}l}
\partial_t\big(k\ast[u-u_0]\big)-\mbox{div}\big(A(t,x)\nabla \varphi(u)\big) & f,\quad & t\in (0,T),\,x\in\Omega\\
u & 0,\quad & t\in (0,T),\,x\in \partial\Omega\\
u|_{t=0} & u_0,\; & x\in \Omega.
\end{array}
\right.
\end{equation}
The kernel $k\in L_{1,\,loc}(\iR_+)$ is given, and $k\ast v$ denotes the
convolution on the positive halfline $\iR_+:=[0,\infty)$ w.r.t.\ the time variable,
that is
$(k\ast v)(t)=\int_0^t k(t-\tau)v(\tau)\,d\tau$, $t\ge 0$. Note that for sufficiently smooth $u$ with $u(0)=u_0$, 
\begin{equation} \label{operatorreform}
\partial_t \big(k\ast [u-u_0]\big)=k \ast \partial_t u.
\end{equation}

The kernel $k$ belongs to a large class of kernels. We assume that it satisfies the condition
\begin{itemize}
\item [{\bf ($\mathcal{PC}$)}] $k\in L_{1,\,loc}(\iR_+)$ is nonnegative and nonincreasing, and there exists a kernel $l\in L_{1,\,loc}(\iR_+)$ such that
$k\ast l=1$ on $(0,\infty)$.
\end{itemize}
In this case we say that $k$ is a kernel of type $\mathcal{PC}$ (cf. \cite{ZWH}) and also write $(k,l)\in \mathcal{PC}$. Note that $(k,l)\in {\cal PC}$ implies that $l$ is completely
positive, cf.\ \cite[Theorem 2.2]{CN} and \cite{CP1}, in particular $l$ is nonnegative.
Kernels of type $\mathcal{PC}$ have been studied by many authors, see e.g.\ 
\cite{KTT,KSVZ,Koch11,SaCa,VZ,VZ2,Za,ZWH}. Convolving the integro-differential equation in 
\eqref{mainproblem} with the kernel $l$ leads to a Volterra integral equation with a completely positive kernel.
Such equations (mainly in an abstract framework, but also in concrete cases involving partial differential operators) were studied earlier e.g.\ in \cite{CNa,CN,CP1,Grip1,JanI}.

Condition ($\mathcal{PC}$) covers a variety of the relevant integro-differential operators w.r.t.\ time that
appear in applications in physics in the context of {\em subdiffusion} processes. An important example is given by $(k,l)=(g_{1-\alpha},g_\alpha)$ with $\alpha\in(0,1)$, where
$g_\beta$ denotes the standard (or Riemann-Liouville) kernel
\[
g_\beta(t)=\,\frac{t^{\beta-1}}{\Gamma(\beta)}\,,\quad
t>0,\quad\beta>0.
\]
In this case, the term $\partial_t(k\ast v)$ becomes the classical Riemann-Liouville fractional derivative
$\partial_t^\alpha v$ of order $\alpha$, and $k \ast \partial_t v={}^c D_t^\alpha v$, the Caputo fractional derivative (cf.\
the right-hand side in (\ref{operatorreform})), if the function $v$ is sufficiently smooth, see e.g.\ \cite{KST}.
Further examples of kernels $(k,l)\in {\cal PC}$ are given by (see \cite[Section 6]{VZ})
\begin{itemize}
\item[(i)]
$k(t)=g_{1-\alpha}(t)e^{-\gamma t}$, $l(t)=g_{\alpha}(t)e^{-\gamma
t}+\gamma(1\ast[g_{\alpha}e^{-\gamma\cdot}])(t)$, $t>0$,
with $\alpha\in (0,1)$ and $\gamma>0$;
\item[(ii)]
$k(t)=\int_0^1 g_\beta(t)\,d\beta$, $l(t)=\int_0^\infty \,\frac{e^{-st}}{1+s}\,{ds}$, $t>0$.
\end{itemize}
Note that in both examples we also have $(l,k)\in {\cal PC}$ (cf.\ \cite{VZ}). Further examples can be found in
\cite{SaCa,VZ}.

Subdiffusion is an important special case of anomalous diffusive behaviour. Time-fractional (linear) diffusion equations 
with time order $\alpha\in (0,1)$ can
be used to model diffusion processes with a mean square displacement equal to a constant times $t^\alpha$ for $t>0$.
The mean square displacement is a time-dependent quantity that measures the dispersion of random processes and describes how fast particles diffuse. In the classical case of Brownian motion it is linear in $t$, that is, particles diffuse faster (for large times) than in the time-fractional case, cf.\ \cite{Hilfer1,Metz,Uch}. A power-law 
mean displacement has been observed experimentally in an abundance of processes
(e.g.\ diffusion on fractals
like e.g.\ some amorphous semiconductors),
see \cite{BG,Metz,Metz2,Uch} and the references given therein. Certain kernels of type $\mathcal{PC}$ (e.g.\ $k$ given in (ii) above) lead
to a mean square displacement that grows logarithmically for large times, a behaviour observed in so-called 
{\em ultraslow diffusion}, see \cite{DrKl,SchSokBl,Sinai} for examples from physics
and \cite{KSVZ,Koch08,Koch11,KuRy} for more mathematical background.

Another context where equations of the form \eqref{mainproblem} appear (in particular the nonlinear case)
is the modelling of dynamic processes in materials with {\em memory}. Important examples are given by
heat conduction with memory (\cite{Nun,JanI}) and the diffusion in fluids in porous media with memory (\cite{CapuFlow,JakuDiss}).

A special feature of problem \eqref{mainproblem} is that we allow for additional coefficients  $A=(a_{ij})$ which may 
also {\em explicitly depend on time}. Concerning these coefficients we merely assume
that they are measurable and bounded and that they satisfy a uniform parabolicity condition. 

As to the nonlinearity, in our main existence result, Theorem \ref{degensol}, we assume that $\varphi$ is subject to the following
conditions.
\begin{itemize}
\item [{\bf (H'$\varphi$)}] $\varphi\in C^1(\iR)$, $\varphi'(r)\ge 0$ for all $r\in \iR$, $\varphi(0)=0$, $\varphi$ is strictly increasing in a neighbourhood of $0$, and there exist $\mu, R>0$ such that
\[
0<\mu\le \varphi'(r)\quad \mbox{for all}\;r\in \iR\;\mbox{with}\;|r|\ge R.
\]
\end{itemize}
Thus $\varphi$ can {\em degenerate} in the sense that $\varphi'$ is not bounded away
from zero on all of $\iR$. An important example is given by $\varphi(r)=|r|^{m-1}r$ with $m>1$, which leads to
an equation of porous medium type.

Time-fractional diffusion equations of porous medium type (including problems with nonlocal diffusion term) have recently been studied by many authors and different methods, see e.g.\ \cite{Aka,ACV2,DiValVes,DNA19,LRS18,Plo15,VZ}. 
Abstract nonlocal in time equations in Hilbert spaces with a kernel $k$ of type $\mathcal{PC}$ and a nonlinearity in
subdifferential form are investigated in \cite{Aka}. As an application of the abstract results, an existence and uniqueness result for porous medium type problems is obtained. Abstract (stochastic) nonlinear time-fractional problems with explicit
time dependence in the nonlinearity are studied in \cite{LRS18} by means of monotonicity methods, see also the very
recent work \cite{LRS19}, where a more general kernel is considered. As a special case, the authors are able to show existence and uniqueness for deterministic and stochastic time-fractional porous medium (TFPM) equations. However, additional $L_\infty$-coefficients in the (main) nonlinear term are not covered; even in the
purely space dependent case $A=A(x)$. 
For a rather general class of kernels $k$ (including the standard kernel), in \cite{JakWitt,Sap,SchWitt}, existence and uniqueness for doubly nonlinear equations with an integro-differential operator as in 
\eqref{mainproblem} are studied in the framework of entropy solutions using the theory of accretive operators. This approach is based on results for abstract nonlinear Volterra equations \cite{Grip2,Grip1} (see also \cite{CNa,CN}) and works
as well for nonlinear terms with explicit space dependence. However, 
the case of explicit time-dependence of the nonlinearity cannot be treated in this way. 
We further mention that decay estimates for TFPM equations are derived in
\cite{DiValVes,VZ}. Numerical schemes for TFPM equations are the subject of \cite{Plo15,Plo14}.
Concerning linear and nonlinear time-fractional problems we further refer to \cite{ACV1,CLS,CP,KTT,Kim,TY,VZ2,
ZaG,ZEQ}.

The present paper contains three main results. The first one is a new compactness criterion of Aubin-Lions type
involving function spaces defined in terms of the integro-differential operator $\partial_t(k\ast \cdot)$ with an {\em arbitrary}
kernel $k$ of type $\mathcal{PC}$, see Theorem \ref{comWVH}.
In the time-fractional case, compactness results of this type were proved recently in \cite{LL18}.
Besides its high importance with regard to weak solutions to non-local in time PDEs, the fact that our compactness
result does not require higher time regularity in terms of fractional Sobolev spaces is very interesting in its
own right. 

As our second main result, Theorem \ref{degensol}, we prove existence of bounded weak $L_2$-solutions to \eqref{mainproblem}
in the degenerate case. Concerning the nonlinearity $\varphi$ we only assume condition (H'$\varphi$).
The kernel $k$ is assumed to be of type $\mathcal{PC}$ and such that the corresponding kernel $l\in L_p((0,T))$
for some $p>1$. The latter condition is crucial to obtain boundedness of the weak solution. To achieve this,
we also assume that $u_0\in L_\infty(\Omega)$ and $f\in L_{q_1}((0,T);L_{q_2}(\Omega))$ with
$q_1$ and $q_2$ sufficiently large, cf.\ condition (H'd) below.

The third main novelty is an $L_1$-contraction estimate for suitably defined weak 
solutions of \eqref{mainproblem}, see Theorem \ref{L1ConTheorem}.
Here the notion of solution differs from the one used in Theorem \ref{degensol} inasmuch as 
concerning time regularity we require now that $k\ast (u-u_0)\in {}_0H^1_1((0,T);L_1(\Omega))$,
instead of $k\ast (u-u_0)\in {}_0H^{1}_{2} ((0,T);H^{-1}(\Omega))$ (the zero means vanishing at $t=0$).
An important consequence of the contraction estimate is a uniqueness result for weak solutions $u$ in the sense
of Theorem \ref{degensol} assuming in addition that $k\ast (u-u_0)\in {}_0H^1_1((0,T);L_1(\Omega))$.
It seems to be a difficult task to derive conditions which are sufficient for this extra regularity.
Possibly, a certain time regularity for the coefficient matrix $A(t,x)$ is required. 

The paper is organized as follows. In Section \ref{Prelim} we discuss the Yosida approximation of the operator
$\partial_t(k\ast \cdot)$, which leads to the family of regularized kernels $k_\gamma$ ($\gamma>0$) of $k$.
We also state an important convexity inequality for operators of the form $\partial_t(k\ast \cdot)$.
Section \ref{CompactnessSec} is devoted to the compactness criterion of Aubin-Lions type involving an operator $\partial_t(k\ast \cdot)$ with a kernel $k$ of type ${\cal PC}$. 
By means of this result and Schauder's fixed point theorem we then prove existence in the 
non-degenerate case in Section \ref{SecND}. Section \ref{boundednessSec} provides global $L_\infty$-bounds
for the weak solutions in the non-degenerate case by the aid of De Giorgi's iteration technique. 
Using the previous results our main existence result in the degenerate case is then established in Section \ref{BddWS}. 
Finally, the $L_1$-contraction estimate is proved in Section \ref{L1C}.
\section{Preliminaries} \label{Prelim}
We first collect some properties of kernels of type $\mathcal{PC}$. Let $(k,l)\in \mathcal{PC}$.
For $\gamma\ge 0$ define the kernels $s_\gamma, r_\gamma \in L_{1,loc}(\iR_+)$ via the scalar Volterra equations (cf.\ \cite{GLS})
\begin{align*}
s_\gamma(t)+\gamma(l\ast s_\gamma)(t) & = 1,\quad t>0,\\
r_\gamma(t)+\gamma(l\ast r_\gamma)(t) & = l(t),\quad t>0.
\end{align*}
Both $s_\gamma$ and $r_\gamma$ are nonnegative for all $\gamma\ge 0$. This is a consequence of the complete positivity of $l$ (see \cite{CN}, \cite{JanI}). Furthermore, $s_\gamma$ belongs to $H^1_{1,\,loc}(\iR_+)$ and is nonincreasing for all 
$\gamma\ge 0$.

For $\gamma>0$ let $h_\gamma\in L_{1,loc}(\iR_+)$ denote the resolvent kernel associated
with $\gamma l$ (cf.\ \cite{GLS}), that is,
\begin{equation} \label{hndef}
h_\gamma(t)+\gamma(h_\gamma\ast l)(t)=\gamma l(t),\quad t>0.
\end{equation}
Observe that $h_\gamma=\gamma r_\gamma=-\dot{s}_\gamma \in L_{1,\,loc}(\iR_+)$, in particular $h_\gamma$ is nonnegative. It is well-known that for any $f\in L_p((0,T))$, $1\le p<\infty$, there holds
$h_n\ast f\rightarrow f$ in $L_p((0,T))$ as $n\rightarrow \infty$, see e.g.\ \cite{Za}.

For $\gamma>0$ we set
\begin{equation} \label{kndef}
k_{\gamma}=k\ast h_\gamma.
\end{equation}
It is known (see e.g.\ \cite{Za}) that $k_\gamma=\gamma s_\gamma$, $\gamma>0$, and thus
the kernels $k_\gamma$ are also nonnegative and nonincreasing, and they belong to
$H^1_{1,\,loc}(\iR_+)$ as well. The kernels $k_\gamma$ can be viewed as a regularized approximation of the kernel $k$
in the following sense. Letting $X$ be a real Banach space, the operator $B$ defined by
\[ B u=\partial_t(k\ast u),\;\;D(B)=\{u\in L_p((0,T);X):\,k\ast u\in \mbox{}_0 H^1_p((0,T);X)\},
\]
where the zero means vanishing at $t=0$, is known to be
$m$-accretive in $L_p((0,T);X)$, cf.\
\cite{Grip1}. Its Yosida approximations $B_{n}$, defined by
$B_{n}=nB(n+B)^{-1},\,n\in \iN$, have the property that for any
$u\in D(B)$, one has $B_{n}u\rightarrow Bu$ in $L_p((0,T);X)$ as
$n\to \infty$. Moreover, one has the representation (see \cite{VZ1})
\begin{equation} \label{Yos}
B_n u=\partial_t(k_n\ast u),\quad u\in L_p((0,T);X),\;n\in
\iN,
\end{equation}
thus the kernels $k_n$ take on the role of $k$ when replacing the operator $B$ by its Yosida approximations $B_n$.

We next state an important convexity inequality for operators of the form
$\partial_t(k\ast \cdot)$. A proof can be found in \cite{KSVZ}.
\begin{lemma} \label{FIconvex} Let $T>0$ and $U$ be an open subset of $\iR$. Let further $k\in
H^1_1((0,T))$ be nonnegative and nonincreasing, $H\in C^1(U)$ be convex, $u_0\in U$, and $u\in L_1((0,T))$ 
with $u(t)\in U$
for a.a. $t\in (0,T)$. Suppose that the functions $H(u)$, $H'(u)u$,
and $H'(u)(\dot{k}\ast u)$ belong to $L_1((0,T))$ (which is the case
if, e.g., $u\in L_\infty((0,T))$). Then
\begin{equation} \label{convexfundidentity}
H'(u(t))\partial_t\big(k \ast [u-u_0]\big)(t) \ge \;\partial_t\,\big(k\ast
[H(u)-H(u_0)]\big)(t),\quad \mbox{a.a.}\;t\in (0,T).
\end{equation}
\end{lemma}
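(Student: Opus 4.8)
I propose the following plan. The idea is to reduce \eqref{convexfundidentity} to a pointwise-in-$t$ convexity estimate in which the sign of $\dot k$ is decisive. Since $k\in H^1_1((0,T))$, one works with its absolutely continuous representative, so that $k(0^+):=\lim_{t\downarrow 0}k(t)$ exists with $k(0^+)\ge 0$, $\dot k\le 0$ a.e., and $k(t)=k(0^+)+(1\ast\dot k)(t)$. The first step is to record, for $w\in L_1((0,T))$, the decomposition
\[
\partial_t\big(k\ast w\big)(t)=k(0^+)\,w(t)+(\dot k\ast w)(t)\quad\text{for a.a. }t\in(0,T),
\]
which follows from $k\ast w=k(0^+)(1\ast w)+1\ast(\dot k\ast w)$ by associativity of convolution and Young's inequality (so in particular $\dot k\ast w\in L_1((0,T))$). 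One applies this with $w=u-u_0$ and with $w=H(u)-H(u_0)$; the latter lies in $L_1((0,T))$ by the hypothesis $H(u)\in L_1$, and the stated integrability of $H'(u)u$ and $H'(u)(\dot k\ast u)$ guarantees that the products occurring below are meaningful as (a.e.\ finite) functions. From here on all identities and inequalities are understood for a.a.\ $t$.

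The second, and key, step is a uniform pointwise bound. Fix such a $t$ and set
\[
\Psi(t):=H'(u(t))\,(u(t)-u_0)-\big(H(u(t))-H(u_0)\big).
\]
The subgradient inequality for the convex function $H\in C^1(U)$ at the point $u(t)$, evaluated at $u_0$, gives $\Psi(t)\ge 0$. I claim that for a.a.\ $s\in(0,t)$,
\[
H'(u(t))\,(u(s)-u_0)-\big(H(u(s))-H(u_0)\big)\le\Psi(t).
\]
Indeed, writing $u(s)-u_0=(u(s)-u(t))+(u(t)-u_0)$ and using the subgradient inequality at $u(t)$ evaluated at $u(s)$, namely $H'(u(t))(u(s)-u(t))\le H(u(s))-H(u(t))$, the left-hand side is bounded above by $\big(H(u(s))-H(u(t))\big)+H'(u(t))(u(t)-u_0)-\big(H(u(s))-H(u_0)\big)=\Psi(t)$.

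The third step combines the two. Multiplying the last pointwise bound by $\dot k(t-s)\le 0$ reverses it; integrating over $s\in(0,t)$ and using $\int_0^t\dot k(t-s)\,ds=\int_0^t\dot k(\sigma)\,d\sigma=k(t)-k(0^+)$, and pulling the constant $H'(u(t))$ out of the $s$-integral, one gets
\[
H'(u(t))\,(\dot k\ast[u-u_0])(t)-(\dot k\ast[H(u)-H(u_0)])(t)\ \ge\ \Psi(t)\,\big(k(t)-k(0^+)\big).
\]
Adding $k(0^+)\Psi(t)$ and invoking the decomposition from the first step, the difference of the two sides of \eqref{convexfundidentity} equals $k(0^+)\Psi(t)+\big(H'(u(t))(\dot k\ast[u-u_0])(t)-(\dot k\ast[H(u)-H(u_0)])(t)\big)\ge k(0^+)\Psi(t)+\Psi(t)\big(k(t)-k(0^+)\big)=k(t)\Psi(t)\ge 0$, since $k(t)\ge 0$ and $\Psi(t)\ge 0$. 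This is exactly the asserted inequality.

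The routine part is the first step: justifying the decomposition of $\partial_t(k\ast\cdot)$ and checking that all products are meaningful in $L_1$ — which is precisely what the hypotheses on $H(u)$, $H'(u)u$ and $H'(u)(\dot k\ast u)$ are for. The main obstacle, and the reason the statement is not entirely trivial, is that one cannot argue separately for the $\delta_0$-part (coefficient $k(0^+)$) and the $\dot k$-part: the naive term-by-term inequality fails for the $\dot k$-convolution. The correct move is the uniform bound by $\Psi(t)$ in the second step, which makes the (nonpositive) contribution $\Psi(t)(k(t)-k(0^+))$ of the $\dot k$-part cancel exactly against part of the nonnegative $k(0^+)\Psi(t)$, leaving the manifestly nonnegative $k(t)\Psi(t)$; this is where both monotonicity ($\dot k\le 0$) and nonnegativity of $k$ enter.
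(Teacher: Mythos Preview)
Your proof is correct. The paper does not give its own proof but refers to \cite{KSVZ}; the argument there proceeds by the same decomposition $\partial_t(k\ast w)=k(0^+)w+\dot k\ast w$ and the same pointwise convexity estimate, typically written as the exact identity
\[
H'(u(t))\partial_t\big(k\ast[u-u_0]\big)(t)-\partial_t\big(k\ast[H(u)-H(u_0)]\big)(t)
=k(t)\Psi(t)+\int_0^t(-\dot k(t-s))\big[H(u(s))-H(u(t))-H'(u(t))(u(s)-u(t))\big]\,ds,
\]
with both summands manifestly nonnegative. Your Step~2 bound $H'(u(t))(u(s)-u_0)-(H(u(s))-H(u_0))\le\Psi(t)$ is precisely equivalent to nonnegativity of the integrand above, so your argument is the standard one, merely phrased as an inequality chain rather than as an identity.
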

We will apply this inequality several times with the kernels $k_\gamma$, $\gamma>0$, which are admissible.
\section{Compactness} \label{CompactnessSec}
In this section we establish a compactness criterion of Aubin-Lions type which is tailor-made for integro-differential
equations involving an operator $\partial_t(k\ast \cdot)$ with a kernel $k$ of type $\mathcal{PC}$.

Letting $f$ be a function on some interval $J\subset \iR$ we define $(\tau_{h}f)(t)=f(t+h)$ for $h>0$. 
If $J=[0,T]$, the translated function $\tau_{h}f$ is then defined on $[-h,T-h]$. 

The proof of our compactness criterion is based on the following general result.
\begin{satz} \cite[Theorem 5]{Si86} \label{SimonMain2}
Let $X, Y, Z$ be Banach spaces with continuous embeddings $X\hookrightarrow Y\hookrightarrow Z$ and assume in addition that the embedding $X \hookrightarrow Y$ is compact. Let $1\leq p < \infty$, $\mathcal F\subset L_p((0,T);Y)$ and assume that
\begin{itemize}
\item[(i)] $\mathcal F$ is bounded in $L_{p}((0,T);X)$,
\item[(ii)] $|\tau_{h}f-f|_{L_{p}((0,T-h);Z)} \rightarrow 0$ as $h \rightarrow 0$, uniformly for $f \in \mathcal F$.
\end{itemize}                                 
Then $\mathcal F$ is relatively compact in $L_{p}((0,T);Y)$.
\end{satz}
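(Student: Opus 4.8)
The plan is to deduce relative compactness from the vector-valued Fréchet--Kolmogorov (Riesz) criterion in $L_p((0,T);Y)$, which states that a bounded subset $\mathcal{F}\subset L_p((0,T);Y)$, $1\le p<\infty$, is relatively compact if and only if (a) for every $0<t_1<t_2<T$ the set of averages $\{\int_{t_1}^{t_2}f(t)\,dt:\,f\in\mathcal{F}\}$ is relatively compact in $Y$, and (b) $\|\tau_{h} f-f\|_{L_p((0,T-h);Y)}\to 0$ as $h\to 0$, uniformly for $f\in\mathcal{F}$. The two tools driving the verification of (a) and (b) are this criterion and the Ehrling--Lions interpolation inequality, which is available precisely because $X\hookrightarrow Y$ is compact while $Y\hookrightarrow Z$ is continuous. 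I first note that $\mathcal{F}$ is indeed bounded in $L_p((0,T);Y)$: by continuity of $X\hookrightarrow Y$ and hypothesis (i), $\|f\|_{L_p((0,T);Y)}\le C\|f\|_{L_p((0,T);X)}\le CM$ for all $f\in\mathcal{F}$, where $M$ denotes the bound from (i).

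The first step is to establish the Ehrling--Lions inequality: for every $\eta>0$ there is a constant $C_\eta>0$ such that $\|v\|_Y\le \eta\|v\|_X+C_\eta\|v\|_Z$ for all $v\in X$. I would prove this by contradiction, assuming the existence of some $\eta_0>0$ and a sequence $(v_n)\subset X$ with $\|v_n\|_Y>\eta_0\|v_n\|_X+n\|v_n\|_Z$; normalizing to $\|v_n\|_Y=1$ gives a bounded sequence in $X$, hence (by compactness of $X\hookrightarrow Y$) a subsequence converging in $Y$, while the inequality forces $\|v_n\|_Z\to 0$ and the $Y$-limit to be $0$, contradicting $\|v_n\|_Y=1$. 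Next I verify condition (a). For fixed $0<t_1<t_2<T$ and $f\in\mathcal{F}$, Hölder's inequality and (i) yield $\|\int_{t_1}^{t_2}f(t)\,dt\|_X\le (t_2-t_1)^{1/p'}\|f\|_{L_p((0,T);X)}\le (t_2-t_1)^{1/p'}M$, so the averages form a bounded subset of $X$; since $X\hookrightarrow Y$ is compact, this subset is relatively compact in $Y$, which is (a).

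The decisive step is condition (b). Applying the Ehrling--Lions inequality pointwise to $v=(\tau_{h} f-f)(t)\in X$ and taking $L_p((0,T-h);\cdot)$ norms gives
\[
\|\tau_{h} f-f\|_{L_p((0,T-h);Y)}\le \eta\,\|\tau_{h} f-f\|_{L_p((0,T-h);X)}+C_\eta\,\|\tau_{h} f-f\|_{L_p((0,T-h);Z)}.
\]
The first term on the right is bounded by $2\eta\|f\|_{L_p((0,T);X)}\le 2\eta M$, uniformly in both $f\in\mathcal{F}$ and $h$, while the second term tends to $0$ as $h\to 0$ uniformly in $f$ by hypothesis (ii). Hence, given $\varepsilon>0$, I would first choose $\eta$ so small that $2\eta M<\varepsilon/2$, and then, with $C_\eta$ now fixed, choose $h_0>0$ so small that $C_\eta\|\tau_{h} f-f\|_{L_p((0,T-h);Z)}<\varepsilon/2$ for all $h\in(0,h_0)$ and all $f\in\mathcal{F}$; together these bounds give $\|\tau_{h} f-f\|_{L_p((0,T-h);Y)}<\varepsilon$ uniformly in $f$, which is (b). The criterion then yields the claim.

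The main obstacle is exactly the order-of-quantifiers argument in step (b): the $X$-translations are merely bounded (not small), so hypothesis (i) alone is useless for the $Y$-equicontinuity, whereas the $Z$-translations are small but $Z$ is too weak to control the $Y$-norm. The Ehrling--Lions inequality is the bridge, but it must be exploited by fixing $\eta$ (hence $C_\eta$) \emph{before} sending $h\to 0$. Secondary technical points I would need to check are the measurability and Bochner-integrability that make the pointwise application of the interpolation inequality legitimate (using $f(t),f(t+h)\in X$ for a.a.\ $t$), and the correct formulation of the mean-value condition (a) on the bounded interval, where only averages over subintervals $[t_1,t_2]$ compactly contained in $(0,T)$ are required.
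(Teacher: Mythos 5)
The paper offers no proof of this statement at all --- it is quoted verbatim from Simon \cite[Theorem 5]{Si86} --- so the only meaningful comparison is with Simon's original argument, which your proposal essentially reproduces: the averaging/translation compactness criterion in $L_p((0,T);Y)$ (Simon's Theorem~1) combined with the Ehrling--Lions interpolation lemma, upgrading the $Z$-translation estimate to a $Y$-translation estimate by fixing $\eta$ (hence $C_\eta$) before letting $h\to 0$. Your proof is correct, including the two delicate points: the quantifier order in step (b) and the fact that the averages $\int_{t_1}^{t_2}f(t)\,dt$ need only be bounded in $X$, compactness of $X\hookrightarrow Y$ then giving their relative compactness in $Y$.
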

We come now to the main result of this section. 
\begin{satz} \label{comWVH}
Let $V$ and $H$ be real and separable Hilbert spaces such that $V$ is densely and compactly embedded
into $H$ and identify $H$ with its dual $H'$ so that $V \hookrightarrow H \hookrightarrow V'$. 
Let $1\le p<\infty$, $T>0$, $x\in H$ and 
$k\in L_{1,\,loc}(\iR_{+})$ be of type $\mathcal{PC}$. Let further
\begin{align*}
W(x,V, H)=\{u\in L_{p}((0,T);V)\,:\,k\ast (u-x)\in {}_0H^{1}_{p} ((0,T);V')\}.
\end{align*}
Then for any $m>0$ the set
\[
W_m:=\big\{u\in W(x,V, H)\,:\,|u|_{ L_{p}((0,T);V)}+|k\ast (u-x)|_{H^{1}_{p} ((0,T);V')}\le m\big\}
\]
is relatively compact in $L_p((0,T);H)$.
\end{satz}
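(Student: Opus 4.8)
The plan is to verify the hypotheses of Theorem \ref{SimonMain2} (Simon's criterion) with the triple of Banach spaces $X=V$, $Y=H$, $Z=V'$. The embedding $V\hookrightarrow H$ is compact by assumption, and $H\hookrightarrow V'$ is continuous, so the abstract framework applies. Condition (i) is immediate: every $u\in W_m$ satisfies $|u|_{L_p((0,T);V)}\le m$ by definition, so $W_m$ is bounded in $L_p((0,T);V)$. The entire difficulty is therefore concentrated in verifying the equicontinuity of translations, condition (ii): we must show that
\[
|\tau_h u-u|_{L_p((0,T-h);V')}\longrightarrow 0\quad\text{as }h\to 0,\ \text{uniformly for }u\in W_m.
\]

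The key idea for (ii) is to transfer time-regularity from $k\ast(u-x)$ back to $u$ itself using the resolvent kernel $l$ (recall $k\ast l=1$). Setting $w:=k\ast(u-x)\in {}_0H^1_p((0,T);V')$ with $|w|_{H^1_p((0,T);V')}\le m$, we have $u-x=\partial_t(l\ast w)=l\ast\dot w$ (using $w(0)=0$ and $l\ast k=1$), hence $u = x + l\ast\dot w$. Then
\[
\tau_h u - u = \tau_h(l\ast\dot w) - l\ast\dot w,
\]
and one estimates the $L_p((0,T-h);V')$-norm of this difference. Writing $(l\ast\dot w)(t)=\int_0^t l(t-s)\dot w(s)\,ds$, the translated convolution $(l\ast\dot w)(t+h)$ splits into $\int_0^t l(t+h-s)\dot w(s)\,ds + \int_t^{t+h} l(t+h-s)\dot w(s)\,ds$. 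The second piece is controlled by $|l|_{L_1((0,h))}\,|\dot w|_{L_p((0,T);V')}$ (after a Minkowski/Young-type argument), which tends to $0$ as $h\to 0$ since $l\in L_{1,loc}$. The first piece minus $l\ast\dot w$ equals $(\tau_h l - l)\ast\dot w$ restricted appropriately, whose $L_p$-norm is bounded by $|\tau_h l - l|_{L_1((0,T))}\,|\dot w|_{L_p((0,T);V')}$ via Young's inequality for convolutions, and $|\tau_h l - l|_{L_1}\to 0$ by continuity of translation in $L_1$. In all these bounds the only $u$-dependent quantity is $|\dot w|_{L_p((0,T);V')}\le m$, so the convergence is uniform over $W_m$. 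One must be slightly careful with the domains of integration near $t=T-h$ and with extending $l$ by zero outside $[0,T]$, but these are routine bookkeeping matters.

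The main obstacle I anticipate is making the convolution-translation estimates fully rigorous in the vector-valued ($V'$-valued) $L_p$ setting: one needs Young's inequality $|l\ast g|_{L_p((0,T);V')}\le |l|_{L_1((0,T))}|g|_{L_p((0,T);V')}$ for Bochner-integrable $g$, and one must handle the truncation of $l$ and the shifting of integration limits carefully so that the "short-interval" term genuinely only involves $|l|_{L_1((0,h))}$ and not something that fails to vanish. A cleaner alternative, if the direct splitting gets awkward, is to approximate $l$ in $L_1((0,T))$ by a continuous (or $C^\infty_c$) kernel $\tilde l$; for the smooth kernel the translation estimate is elementary (uniform continuity plus Young), and the error from replacing $l$ by $\tilde l$ is $\le |l-\tilde l|_{L_1((0,T))}\cdot(1+\ldots)\cdot m$, again uniform in $W_m$. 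Once (i) and (ii) are established, Theorem \ref{SimonMain2} yields relative compactness of $W_m$ in $L_p((0,T);H)$, which is the claim.
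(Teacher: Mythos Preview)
Your proposal is correct and follows essentially the same route as the paper: apply Simon's criterion with $X=V$, $Y=H$, $Z=V'$, write $u-x=l\ast v$ with $v=\partial_t(k\ast(u-x))$, and split the translated convolution into the $(\tau_h l-l)\ast v$ piece (handled by Young and continuity of translation in $L_1$) and the short-interval piece (handled by Young and $|\chi_{(0,h)}l|_{L_1}\to 0$). The paper carries out exactly these two estimates, called $I_1$ and $I_2$, with the bookkeeping you anticipate made explicit; your optional smoothing-of-$l$ alternative is not needed.
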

\begin{proof}
We will show the relative compactness of $W_m$ in $L_p((0,T);H)$ by means of Theorem \ref{SimonMain2}
with $X=V$, $Y=H$ and $Z=V'$. Evidently, $W_m$ is bounded in $L_{p}((0,T);V)$, by definition, and thus
$W_m$ satisfies condition (i) in Theorem \ref{SimonMain2}. It remains to show that
\begin{equation} \label{cond2}
|\tau_{h}u-u|_{L_{p}((0,T-h); V')} \rightarrow 0\,\,\, \mbox{as}\,\, h \rightarrow 0\,\,\mbox{uniformly for}\, 
u\in W_m.
\end{equation}

Let $u\in W_m$. Then $k\ast (u-x)\in { }_0 H^{1}_{p} ((0,T);V')$ and thus $v:=\partial_t(k\ast [u-x])$
belongs to $L_p((0,T);V')$. Since $k\ast (u-x)(0)=0$ and $k\ast l=1$ (with $l$ being the kernel from 
condition ($\mathcal{PC}$)) we have $u=l\ast v+x$. In fact,
\begin{align*}
u-x & = \partial_t\big(l\ast k\ast [u-x]\big)
 =l\ast \partial_t(k\ast [u-x])= l\ast v.
\end{align*}
Using this expression for $u$ we have
\begin{align*}
|\tau_h u-u& |_{L_{p}((0,T-h); V')}^p = \int_0^{T-h}|(l\ast v)(t+h)-(l\ast v)(t)|_{V'}^p\,dt\\
& = \int_0^{T-h} \big|\int_0^{t+h} l(t+h-s)v(s)\,ds-\int_0^t l(t-s)v(s)\,ds\big|_{V'}^p\,dt\\
& =  \int_0^{T-h} \big|\int_0^t \big(l(t+h-s)-l(t-s)\big)v(s)\,ds+\int_t^{t+h}l(t+h-s)v(s)\,ds\big|_{V'}^p
\,dt\\
& \le 2^p\big(I_1(v,h)+I_2(v,h)\big),
\end{align*}
where
\begin{align*}
I_1(v,h) &= \int_0^{T-h} \big|\int_0^t \big(l(t+h-s)-l(t-s)\big)v(s)\,ds\big|_{V'}^p\,dt,\\
I_2(v,h) &= \int_0^{T-h} \big|\int_t^{t+h}l(t+h-s)v(s)\,ds\big|_{V'}^p\,dt.
\end{align*}
Setting $\tilde{l}_h=\tau_h l-l$ and using Young's inequality for convolutions, we may estimate $I_1(v,h)$ as follows:
\begin{align*}
I_1(v,h) & =\int_0^{T-h} \big| (\tilde{l}_h\ast v)(t)\big|^p_{V'}\,dt
=|\tilde{l}_h\ast v|_{L_p((0,T-h);V')}^p\\
& \le |\tilde{l}_h|^p_{L_1((0,T-h))}|v|_{L_p((0,T-h);V')}^p\\
& = |\tilde{l}_h|^p_{L_1((0,T-h))} |\partial_t(k\ast [u-x])|_{L_p((0,T-h);V')}^p\\
& \le m^p |\tilde{l}_h|^p_{L_1((0,T))}. 
\end{align*}
By continuity of translation on $L_1$, we have $|\tilde{l}_h|_{L_1((0,T))}\to 0$ as $h\to 0$ and therefore
$I_1(v,h)\to 0$ as $h\to 0$, uniformly w.r.t.\ $v=\partial_t(k\ast [u-x])$ with $u\in W_m$. 

As to $I_2(v,h)$, we first substitute $\sigma=t+h-s$ in the inner integral and then $\tilde{t}=t+h$ in the outer integral
to obtain
\begin{align*}
I_2(v,h) & =\int_0^{T-h} \big|\int_0^{h}l(\sigma)v(t+h-\sigma)\,d\sigma\big|_{V'}^p\,dt  \\
 & = \int_h^{T} \big|\int_0^{h}l(\sigma)v(\tilde{t}-\sigma)\,d\sigma\big|_{V'}^p\,d\tilde{t}\\
 & = |[\chi_{(0,h)}l]\ast v|_{L_p((h,T);V')}^p, 
\end{align*}
where $\chi_{(0,h)}$ denotes the characteristic function of the interval $(0,h)$. Applying Young's inequality for convolutions
it follows that
\begin{align*}
I_2(v,h) & \le  |[\chi_{(0,h)}l]\ast v|_{L_p((0,T);V')}^p\le  |[\chi_{(0,h)}l]|_{L_1((0,T))}^p |v|_{L_p((0,T);V')}^p\\
 & \le m^p  |[\chi_{(0,h)}l]|_{L_1((0,T))}^p.
\end{align*}
By the dominated convergence theorem, $|[\chi_{(0,h)}l]|_{L_1((0,T))}\to 0$ as $h\to 0$ and thus  
$I_2(v,h)\to 0$ as $h\to 0$, uniformly w.r.t.\ $v=\partial_t(k\ast [u-x])$ where $u\in W_m$. 

From the preceding estimates we conclude that \eqref{cond2} holds true and thus the proof of the theorem is complete.
\end{proof}

The following compactness result is similar to Theorem \ref{comWVH}, however only two Hilbert spaces $V$ and $H$
are involved, in particular no Gelfand triple. We will apply this result in Section \ref{BddWS} (with $V=L_2(\Omega)$ and 
$H=H^{-1}(\Omega)$) in the final step of the proof of the main result of this paper.
But, like Theorem \ref{comWVH}, it is also of independent interest.
\begin{satz} \label{comWVH2}
Let $V$ and $H$ be real and separable Hilbert spaces such that $V$ is densely and compactly embedded
into $H$. 
Let $1\le p<\infty$, $T>0$, $x\in V$ and 
$k\in L_{1,\,loc}(\iR_{+})$ be of type $\mathcal{PC}$. Let further
\begin{align*}
\tilde{W}(x,V, H)=\{u\in L_{p}((0,T);V)\,:\,k\ast (u-x)\in {}_0H^{1}_{p} ((0,T);H)\}.
\end{align*}
Then for any $m>0$ the set
\[
\tilde{W}_m:=\big\{u\in \tilde{W}(x,V, H)\,:\,|u|_{ L_{p}((0,T);V)}+|k\ast (u-x)|_{H^{1}_{p} ((0,T);H)}\le m\big\}
\]
is relatively compact in $L_p((0,T);H)$.
\end{satz}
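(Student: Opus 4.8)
The plan is to follow the same strategy as in the proof of Theorem~\ref{comWVH}, namely to invoke Theorem~\ref{SimonMain2}, but now with the Banach spaces $X=V$, $Y=H$ and $Z=H$ itself (so that the role of $V'$ in the previous proof is played by $H$). Condition~(i) of Theorem~\ref{SimonMain2} is immediate, since $\tilde W_m$ is bounded in $L_p((0,T);V)$ by definition. The whole work is therefore to verify the equicontinuity of translations, i.e.
\[
|\tau_h u-u|_{L_p((0,T-h);H)}\to 0\quad\text{as }h\to 0,\ \text{uniformly for }u\in\tilde W_m.
\]
First I would reconstruct $u$ from $v:=\partial_t(k\ast[u-x])\in L_p((0,T);H)$. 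As before, from $k\ast(u-x)(0)=0$ and $k\ast l=1$ one gets $u-x=l\ast v$, hence $u=l\ast v+x$. Since now $x\in V\hookrightarrow H$, the constant-in-time function $t\mapsto x$ is itself an element of $L_p((0,T);H)$, and so $\tau_h u-u=\tau_h(l\ast v)-(l\ast v)$ because the $x$-contributions cancel exactly. Thus the translation difference of $u$ equals that of $l\ast v$, and $x$ drops out of the estimate entirely (this is the only place the hypothesis $x\in V$, rather than merely $x\in H$, is genuinely comfortable to use, though $x\in H$ would already suffice here).

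The next step is the splitting of $(l\ast v)(t+h)-(l\ast v)(t)$ into the two pieces
\[
\int_0^t\bigl(l(t+h-s)-l(t-s)\bigr)v(s)\,ds+\int_t^{t+h}l(t+h-s)v(s)\,ds,
\]
leading, via $(a+b)^p\le 2^p(a^p+b^p)$, to the two integrals $I_1(v,h)$ and $I_2(v,h)$ exactly as in the proof of Theorem~\ref{comWVH}, only with the norm $|\cdot|_{V'}$ replaced everywhere by $|\cdot|_H$. For $I_1$ I would set $\tilde l_h=\tau_h l-l$, recognize the inner term as $(\tilde l_h\ast v)(t)$, and apply Young's inequality for convolutions to bound $I_1(v,h)\le|\tilde l_h|_{L_1((0,T))}^p\,|v|_{L_p((0,T);H)}^p\le m^p|\tilde l_h|_{L_1((0,T))}^p$; continuity of translation in $L_1$ then gives $I_1(v,h)\to 0$ uniformly in $u\in\tilde W_m$. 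For $I_2$ I would substitute $\sigma=t+h-s$ and then shift the outer variable to rewrite it as $|[\chi_{(0,h)}l]\ast v|_{L_p((h,T);H)}^p$, extend the interval to $(0,T)$, and again apply Young to get $I_2(v,h)\le|\chi_{(0,h)}l|_{L_1((0,T))}^p\,m^p$; dominated convergence (using $l\in L_{1,loc}$) makes this tend to $0$ uniformly. Combining these two estimates yields the required equicontinuity, and Theorem~\ref{SimonMain2} then delivers relative compactness of $\tilde W_m$ in $L_p((0,T);H)$.

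Honestly, there is no real obstacle here: the argument is essentially identical to that of Theorem~\ref{comWVH}, and the replacement of the Gelfand triple $V\hookrightarrow H\hookrightarrow V'$ by the single compact embedding $V\hookrightarrow H$ only \emph{simplifies} the bookkeeping, since one uses $H$ in the role of the weak space $Z$ and the reconstruction identity $u=l\ast v+x$ now takes place directly in $L_p((0,T);H)$ without passing through $V'$. The one point deserving a line of care is that all of $v=\partial_t(k\ast[u-x])$, the translate $\tau_h u$, and the resulting inequalities live in $L_p((0,T);H)$, which is legitimate precisely because $k\ast(u-x)\in{}_0H^1_p((0,T);H)$ by the definition of $\tilde W(x,V,H)$; and that the uniformity in $u\in\tilde W_m$ comes entirely from the single bound $|v|_{L_p((0,T);H)}\le|k\ast(u-x)|_{H^1_p((0,T);H)}\le m$, the $h$-dependent factors $|\tilde l_h|_{L_1}$ and $|\chi_{(0,h)}l|_{L_1}$ being independent of $u$.
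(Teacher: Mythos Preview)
Your proposal is correct and takes exactly the same approach as the paper: the paper's proof simply states that the argument is analogous to that of Theorem~\ref{comWVH}, applying Theorem~\ref{SimonMain2} with $X=V$ and $Y=Z=H$, and you have carried out precisely this analogous argument in detail. Your observation that $x\in H$ would already suffice for the translation-cancellation is a nice side remark, but otherwise there is nothing to add.
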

\begin{proof}
The proof is analogous to the one of Theorem \ref{comWVH}. To obtain the relative compactness of 
$\tilde{W}_m$ in $L_p((0,T);H)$,
Theorem \ref{SimonMain2} is now applied with $X=V$ and $Y=Z=H$. The line of arguments is then exactly
the same as before.
\end{proof}
\begin{bemerk1}
{\em
We remark that in the situation of a Gelfand triple
${V} \hookrightarrow {H} \hookrightarrow {V}'$ where the Hilbert space ${V}$
is densely and compactly embedded into the Hilbert space ${H}$, then also ${H}$ is densely and
compactly embedded into ${V'}$ and thus one can apply Theorem \ref{comWVH2} to both of the pairs
$(V,H)$ and $(H,V')$.
}
\end{bemerk1}
\section{Existence in the non-degenerate case} \label{SecND}
In this section we consider the problem \eqref{mainproblem} in the non-degenerate case, which means that $\varphi'$ is bounded away from zero on all of 
$\iR$. We will prove existence of a weak solution of this quasilinear problem by means of Schauder's fixed point theorem.
The basic idea of the proof is not new; it has already been applied to various types of problems, in particular in the
stationary case and to quasilinear parabolic equations with the usual time derivative. The main ingredients in our argument
are the Aubin-Lions type result from the previous section, Theorem \ref{comWVH}, and the subsequent existence result,
Theorem \ref{linearsol}, on 
the linear problem 
\begin{equation} \label{linearhilfsproblem}
\left\{
\begin{array}{r@{\;=\;}l@{\;}l}
\partial_t\big(k\ast[u-u_0]\big)-\mbox{div}\big(A(t,x)\nabla u\big) & f,\quad & (t,x)\in \Omega_T\\
u & 0,\quad & (t,x)\in \Gamma_T \\
u|_{t=0} & u_0,\; & x\in \Omega.
\end{array}
\right.
\end{equation}
Here we use the notation $\Omega_T=(0,T)\times \Omega$, $\Gamma=\partial \Omega$, and $\Gamma_T=(0,T)\times \Gamma$.
The coefficients and data are supposed to satisfy the following assumptions.
\begin{itemize}
\item [{\bf (Hd)}] $\; u_0\in L_2(\Omega)$ and $f\in L_2((0,T);L_2(\Omega))$. 
\item [{\bf (HA)}] $\;A\in L_\infty((0,T)\times \Omega;\iR^{d\times
d})$, and there exists a $\nu>0$ such that
\[
(A(t,x)\xi|\xi)\ge \nu|\xi|^2,\quad \mbox{for a.a.}\;(t,x)\in
\Omega_T,\,\mbox{and all}\,\xi\in \iR^d.
\]
\end{itemize}
We set $\oH^1_2(\Omega):=\overline{C_0^\infty(\Omega)}\,{}^{H^1_2(\Omega)}$ and
denote by $H^{-1}(\Omega)$ the dual space of $\oH^1_2(\Omega)$. 
We say that $u$ is a {\em weak solution} of \eqref{linearhilfsproblem} if 
\[
u\in W(T,u_0):=\{w\in L_2((0,T);\oH^1_2(\Omega)):\;k\ast (w-u_0)\in {}_0H^{1}_{2} ((0,T);H^{-1}(\Omega))\}
\]
and for any test function
\[
\eta\in \oH^{1,1}_2(\Omega_T):= H^1_2((0,T);L_2(\Omega))\cap
L_2((0,T);\oH^1_2(\Omega))
\]
with $\eta|_{t=T}=0$ there holds
\[
\int_{0}^{T} \int_\Omega \Big(-\eta_t \big(k\ast [u-u_0]\big)+
(A\nabla u|\nabla \eta)\Big)\,dx\,dt=\,\int_0^T \int_\Omega f\eta\,dx\,dt.
\]

\begin{satz} \label{linearsol}
Let $T>0$, $\Omega$ be a bounded domain in $\iR^d$, and $(k,l)\in \mathcal{PC}$. Suppose that the assumptions (Hd) and (HA) are satisfied. Then
the problem \eqref{linearhilfsproblem} possesses a unique weak solution $u$ in the class $W(T,u_0)$, and there
exists a constant $M_0>0$ depending only on $\nu, |A|_{L_\infty(\Omega_T)}, l$, $\Omega$, and $T$ such that
\begin{equation} \label{linearest}
|k\ast (u-u_0)| {}_{H^{1}_{2} ((0,T);H^{-1}(\Omega))}+|u|_{L_2((0,T);H^1_2(\Omega))}\le
M_0\big(|u_0|_{L_2(\Omega)}+|f|_{L_2(\Omega_T)}).
\end{equation}
\end{satz}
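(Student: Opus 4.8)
The plan is to establish existence and uniqueness for the linear problem \eqref{linearhilfsproblem} via the Galerkin method, combined with the regularized kernels $k_\gamma$ from Section~\ref{Prelim} to handle the nonlocal-in-time operator. First I would pick a basis $\{w_j\}_{j\in\iN}$ of $\oH^1_2(\Omega)$ that is orthonormal in $L_2(\Omega)$ (e.g.\ the eigenfunctions of the Dirichlet Laplacian), and for each $n$ seek $u_n(t)=\sum_{j=1}^n c_j^n(t)w_j$ solving the finite-dimensional system obtained by testing the equation against $w_1,\dots,w_n$. Because the kernel $k$ is merely $L_{1,loc}$ and nonincreasing, the resulting system is a linear Volterra integral equation for the vector $(c_j^n)$ after convolving once with $l$ (using $k\ast l=1$, so that $k\ast\partial_t u_n = u_n - u_0$ in convolved form reduces the principal part to a Volterra equation with kernel $l$); standard Volterra theory in $L_p$ gives a unique solution $c^n\in {}_0H^1_p$ on $(0,T)$. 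Alternatively, and perhaps more cleanly, I would first replace $k$ by $k_\gamma=\gamma s_\gamma\in H^1_{1,loc}$ and $\partial_t(k_\gamma\ast\cdot)$ by its pointwise form, solve the (now genuinely parabolic-type) Galerkin ODE system, and pass $\gamma\to\infty$ at the end using $h_\gamma\ast f\to f$ in $L_p$.

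The heart of the argument is the a priori estimate \eqref{linearest}. Here I would test the Galerkin equation with $u_n(t)$ itself and use the convexity inequality, Lemma~\ref{FIconvex}, with $H(r)=\tfrac12 r^2$ and the admissible kernel $k_\gamma$, to get
\[
\tfrac12\,\partial_t\big(k_\gamma\ast[|u_n|_{L_2(\Omega)}^2-|P_n u_0|_{L_2(\Omega)}^2]\big)(t)+\nu|\nabla u_n(t)|_{L_2(\Omega)}^2\le \big(f(t),u_n(t)\big)_{L_2(\Omega)},
\]
where $P_n$ is the $L_2$-projection onto the span of $w_1,\dots,w_n$. Convolving with the kernel $l$ (so that $l\ast\partial_t(k_\gamma\ast v)$ behaves like $v$ up to the $s_\gamma$-correction, using $h_\gamma\ast l\le l$ and complete positivity), absorbing the right-hand side by Young's inequality and the Poincaré inequality, and using nonnegativity of $s_\gamma$ and $l$, yields a bound on $|u_n|_{L_2((0,T);\oH^1_2(\Omega))}$ in terms of $|u_0|_{L_2(\Omega)}+|f|_{L_2(\Omega_T)}$, uniformly in $n$ and $\gamma$. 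Then, reading the equation itself, $\partial_t(k_\gamma\ast[u_n-P_nu_0])=\mathrm{div}(A\nabla u_n)+f$ is bounded in $L_2((0,T);H^{-1}(\Omega))$ by the coercivity-free estimate $|\mathrm{div}(A\nabla u_n)|_{H^{-1}}\le|A|_{L_\infty}|\nabla u_n|_{L_2}$, and $k_\gamma\ast[u_n-P_nu_0]$ vanishes at $t=0$, so it is bounded in ${}_0H^1_2((0,T);H^{-1}(\Omega))$.

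With these uniform bounds I would extract a subsequence converging weakly in $L_2((0,T);\oH^1_2(\Omega))$ and weakly in ${}_0H^1_2((0,T);H^{-1}(\Omega))$ (for $k_\gamma\ast[u_n-P_nu_0]$), pass to the limit first in $n$ and then in $\gamma$ — in the latter step using that $k_\gamma\ast v\to k\ast v$ for fixed $v$ together with $h_\gamma\ast g\to g$ in $L_2$ to identify the limit of the nonlocal term — and check that the limit $u$ lies in $W(T,u_0)$ and satisfies the weak formulation; the linearity of the problem makes the passage to the limit in the term $(A\nabla u_n|\nabla\eta)$ immediate by weak convergence. Lower semicontinuity of norms under weak convergence then transfers the a priori bound to $u$, giving \eqref{linearest}. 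Uniqueness follows by taking the difference $u$ of two solutions (with $u_0=0$, $f=0$), testing with $u$, applying Lemma~\ref{FIconvex} again with the regularized kernels (after a mollification/Yosida step since $k$ itself may not be in $H^1_1$), convolving with $l$, and using that $k\ast\partial_t(\tfrac12|u|_{L_2}^2)\ge 0$ together with the coercivity to force $\nabla u\equiv 0$, hence $u\equiv 0$ by Poincaré. The main obstacle is the low regularity of $k$: one cannot test directly against the nonlocal term, so every energy estimate and every uniqueness argument must be run through the approximating kernels $k_\gamma$ and the identity $u=l\ast v+x$ (as in the proof of Theorem~\ref{comWVH}), and the limits $\gamma\to\infty$ must be justified carefully using the convergence $h_\gamma\ast f\to f$ in $L_p$ recalled in Section~\ref{Prelim}.
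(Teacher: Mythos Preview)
Your proposal is sound and follows the standard route for such problems: Galerkin approximation, energy estimates via the convexity inequality (Lemma~\ref{FIconvex}) applied with the regularized kernels $k_\gamma$, convolution with $l$ and then $k$ to extract the $L_2((0,T);\oH^1_2(\Omega))$ bound, the $H^{-1}$ bound read off from the equation, and uniqueness by testing the difference against itself. The one place where your sketch is a bit loose is the passage from the convolved inequality to the actual $L_2$-in-time gradient bound: after convolving with $l$ the time-derivative term becomes $h_\gamma\ast[\cdot]$, and one must send $\gamma\to\infty$ \emph{before} convolving once more with $k$ to turn $l\ast|\nabla u_n|^2$ into $1\ast|\nabla u_n|^2$ (compare the chain \eqref{Est3}--\eqref{Est5} later in the paper). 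But this is a matter of ordering the limits, not a gap.

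The paper itself does not give a proof at all: it simply invokes \cite[Theorem~3.1]{ZWH} together with Poincar\'e's inequality. That reference treats the abstract Gelfand-triple problem $\partial_t(k\ast[u-u_0])+\mathcal{A}(t)u=f$ with a bounded coercive family $\mathcal{A}(t):V\to V'$ and establishes existence, uniqueness and the estimate in one stroke; the present theorem is the specialization $V=\oH^1_2(\Omega)$, $\mathcal{A}(t)u=-\mathrm{div}(A(t,\cdot)\nabla u)$, with Poincar\'e supplying the full $H^1_2$-norm from the gradient. Your Galerkin argument is precisely the machinery behind \cite{ZWH}, so you have reconstructed what the citation hides rather than taken a different path.
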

This result follows directly from Theorem 3.1 in \cite{ZWH} (and its proof) and Poincar\'e's inequality.

Concerning the nonlinear problem \eqref{mainproblem} we impose the following non-degeneracy condition on
the function $\varphi$.
\begin{itemize}
\item [{\bf (H$\varphi$)}] $\varphi\in C^1(\iR)$ and there exist $c_0, c_1>0$ such that
$c_0\le \varphi'(r)\le c_1,\;r\in \iR$. 
\end{itemize}
We say that $u\in W(T,u_0)$ is a {\em weak solution} of \eqref{mainproblem} if 
for any test function $\eta\in \oH^{1,1}_2(\Omega_T)$
with $\eta|_{t=T}=0$ there holds
\begin{equation} \label{weakformulation}
\int_{0}^{T} \int_\Omega \Big(-\eta_t \big(k\ast [u-u_0]\big)+
(A\nabla \varphi(u)|\nabla \eta)\Big)\,dx\,dt=\,\int_0^T \int_\Omega f\eta\,dx\,dt.
\end{equation}
Observe that $u\in W(T,u_0)$ and the condition (H$\varphi$) ensure that $\varphi(u)\in L_2((0,T); H^1_2(\Omega))$
with $\nabla \varphi(u)=\varphi'(u)\nabla u\in L_2(\Omega_T)$.

Our existence result in the non-degenerate case reads as follows.
\begin{satz} \label{existenceND}
Let $T>0$, $\Omega$ be a bounded domain in $\iR^d$, and $(k,l)\in \mathcal{PC}$. Suppose that the assumptions (Hd), (HA), and (H$\varphi$) are satisfied.
Then the problem \eqref{mainproblem} admits a weak solution $u$ in the class $W(T,u_0)$.
\end{satz}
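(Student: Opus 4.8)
The plan is to apply Schauder's fixed point theorem to a map $\Phi$ on a suitable closed, bounded, convex subset of $L_2(\Omega_T)$, using Theorem \ref{linearsol} to solve a sequence of linearized problems and Theorem \ref{comWVH} to obtain the compactness needed for continuity and for the image of $\Phi$ to be relatively compact. Concretely, given $w\in L_2(\Omega_T)$, set $\bar A(t,x):=\varphi'(w(t,x))A(t,x)$; by (H$\varphi$) and (HA) this coefficient matrix is measurable, bounded by $c_1|A|_{L_\infty}$, and uniformly parabolic with ellipticity constant $c_0\nu$, so it satisfies (HA) with $\nu$ replaced by $c_0\nu$. Hence Theorem \ref{linearsol} applied to the data $(u_0,f)$ and coefficient matrix $\bar A$ yields a unique weak solution $u=:\Phi(w)\in W(T,u_0)$ of $\partial_t(k\ast[u-u_0])-\mathrm{div}(\varphi'(w)A\nabla u)=f$, together with the a priori bound \eqref{linearest}. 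Note this linear problem is not quite \eqref{mainproblem}: a fixed point $u=\Phi(u)$ satisfies $\mathrm{div}(\varphi'(u)A\nabla u)=\mathrm{div}(A\nabla\varphi(u))$ since $\varphi'(u)\nabla u=\nabla\varphi(u)$, so any fixed point is a weak solution of \eqref{mainproblem} in the sense of \eqref{weakformulation}; here one uses the remark after \eqref{weakformulation} that $\varphi(u)\in L_2((0,T);H^1_2(\Omega))$ with $\nabla\varphi(u)=\varphi'(u)\nabla u$.

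Next I would fix the invariant set. From \eqref{linearest} applied with the modified ellipticity constant $c_0\nu$, there is a constant $M_1=M_1(c_0,c_1,\nu,|A|_{L_\infty},l,\Omega,T)$, \emph{independent of $w$}, such that
\[
|k\ast(u-u_0)|_{H^1_2((0,T);H^{-1}(\Omega))}+|u|_{L_2((0,T);\oH^1_2(\Omega))}\le M_1\big(|u_0|_{L_2(\Omega)}+|f|_{L_2(\Omega_T)}\big)=:m.
\]
Thus $\Phi$ maps all of $L_2(\Omega_T)$ into $W_m$ (in the notation of Theorem \ref{comWVH}, with $V=\oH^1_2(\Omega)$, $H=L_2(\Omega)$, $x=u_0$, $p=2$), and by Theorem \ref{comWVH} the set $W_m$ is relatively compact in $L_2((0,T);L_2(\Omega))=L_2(\Omega_T)$. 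Let $K:=\overline{\mathrm{conv}}\,\big(\Phi(L_2(\Omega_T))\big)\subset L_2(\Omega_T)$; since $W_m$ is relatively compact and convex combinations stay in the bounded set carved out by the (convex) a priori bound, $K$ is a nonempty, closed, bounded, convex, and compact subset of $L_2(\Omega_T)$, and $\Phi(K)\subset K$.

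It remains to show $\Phi\colon K\to K$ is continuous in the $L_2(\Omega_T)$-topology. Take $w_n\to w$ in $L_2(\Omega_T)$ and set $u_n:=\Phi(w_n)$. By the uniform bound, $(u_n)\subset W_m$, so along a subsequence $u_n\to u$ in $L_2(\Omega_T)$ (by Theorem \ref{comWVH}), $u_n\rightharpoonup u$ in $L_2((0,T);\oH^1_2(\Omega))$, and $k\ast(u_n-u_0)\rightharpoonup k\ast(u-u_0)$ in ${}_0H^1_2((0,T);H^{-1}(\Omega))$ (the latter space is reflexive and the closed subspace of functions vanishing at $t=0$ is weakly closed). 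Passing $n\to\infty$ in the weak formulation for $u_n$ with test function $\eta$ requires handling the nonlinear term $\int_0^T\int_\Omega \varphi'(w_n)(A\nabla u_n|\nabla\eta)$: after extracting a further subsequence with $w_n\to w$ a.e., continuity of $\varphi'$ and the uniform bound $c_0\le\varphi'(w_n)\le c_1$ give $\varphi'(w_n)\nabla\eta\to\varphi'(w)\nabla\eta$ strongly in $L_2(\Omega_T)$ by dominated convergence, which pairs with the weak convergence $\nabla u_n\rightharpoonup\nabla u$ to yield convergence of the product; the linear terms pass to the limit trivially. Hence $u=\Phi(w)$, and since the limit is independent of the subsequence (uniqueness in Theorem \ref{linearsol}), the full sequence converges, establishing continuity. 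Schauder's theorem then provides a fixed point $u=\Phi(u)\in K\subset W(T,u_0)$, which is the desired weak solution of \eqref{mainproblem}.

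The main obstacle is the passage to the limit in the nonlinear diffusion term, which is precisely why the strong $L_2(\Omega_T)$-compactness from Theorem \ref{comWVH} is essential: one needs strong convergence of $w_n$ (to get $\varphi'(w_n)\to\varphi'(w)$ a.e. and then strongly test against it) combined with the weak convergence of the gradients $\nabla u_n$. A secondary technical point is verifying that the limit function still has $k\ast(u-u_0)$ vanishing at $t=0$ and lying in ${}_0H^1_2((0,T);H^{-1}(\Omega))$; this follows from weak closedness of that space together with the uniform bound, so no extra argument is needed beyond citing reflexivity.
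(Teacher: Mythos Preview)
Your proof is correct and follows essentially the same route as the paper: define a fixed-point map via the linear result (Theorem \ref{linearsol}) with coefficient matrix $A\varphi'(w)$, use the uniform estimate \eqref{linearest} together with the compactness criterion (Theorem \ref{comWVH}) to place the image in a relatively compact set, prove continuity by combining a.e.\ convergence of $\varphi'(w_n)$ (via dominated convergence) with weak convergence of $\nabla u_n$, and conclude by Schauder. The only cosmetic difference is that the paper applies Schauder on the closed ball $B_M\subset L_2(\Omega_T)$ using that the map is compact, whereas you restrict to the closed convex hull of the image and use that this set is itself compact; both are standard variants and neither buys anything extra here.
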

\begin{proof}
Define the map $S:\,L_2(\Omega_T)\rightarrow L_2(\Omega_T)$ which assigns to any $u\in L_2(\Omega_T)$ the
unique weak solution $v=S(u)$ of the linear problem
\begin{equation} \label{lpfixedpointmap}
\left\{
\begin{array}{r@{\;=\;}l@{\;}l}
\partial_t\big(k\ast[v-u_0]\big)-\mbox{div}\big(A(t,x)\varphi'(u)\nabla v\big) & f,\quad & (t,x)\in \Omega_T\\
v & 0,\quad & (t,x)\in \Gamma_T \\
v|_{t=0} & u_0,\; & x\in \Omega.
\end{array}
\right.
\end{equation}
This map is well defined thanks to Theorem \ref{linearsol}. In fact, for any $u\in L_2(\Omega_T)$ the coefficient
matrix $A\varphi'(u)$ belongs to $L_\infty(\Omega_T;\iR^{d\times
d})$, and 
\[
\big(A(t,x)\varphi'(u(t,x))\xi|\xi\big)\ge c_0 \nu|\xi|^2,\quad \mbox{for a.a.}\;(t,x)\in
\Omega_T,\,\mbox{and all}\,\xi\in \iR^d.
\] 
From Theorem \ref{linearsol} we further know that the solution $v$ of \eqref{lpfixedpointmap} lies in the class
$W(T,u_0)$ satisfying an estimate
\begin{equation} \label{linearestS}
|k\ast (v-u_0)| {}_{H^{1}_{2} ((0,T);H^{-1}(\Omega))}+|v|_{L_2((0,T);H^1_2(\Omega))}\le M,
\end{equation} 
where the bound $M>0$ can be chosen in such a way that it only depends on $\nu$, $c_0$, $c_1$, 
$|A|_{L_\infty(\Omega_T)}$,
 $l$, $\Omega$, $T$, $|u_0|_{L_2(\Omega)}$ and $|f|_{L_2(\Omega_T)}$. Set $K=\{w\in W(T,u_0):$ $w$ satisfies
 estimate \eqref{linearestS}$\}$.
Denoting by $B_M$ the closed ball
 in $L_2(\Omega_T)$ with radius $M$ and center $0$, the estimate \eqref{linearestS} implies that
 $v=S(u)\in B_M$ for all $u\in L_2(\Omega_T)$, in particular $S$ leaves the non-empty, closed, bounded, and convex set
 $B_M$ invariant. Furthermore, it follows from estimate \eqref{linearestS} and Theorem \ref{comWVH} with $p=2$,
 $V=\oH^1_2(\Omega)$, and $H=L_2(\Omega)$ that
 $S$ maps $L_2(\Omega_T)$ into a set (namely $K$) which is relatively compact in $L_2(\Omega_T)$.
 
 Next, we show that $S$ is continuous. Let $u_n\to u$ in $L_2(\Omega_T)$ and set $v_n=S(u_n)$. In order to show that
 $v_n\to S(u)$ in $L_2(\Omega_T)$ we prove that each subsequence of $(v_n)$ has a subsequence converging to
 $S(u)$ in $L_2(\Omega_T)$. Thus we may deliberately take subsequences in what follows. Denoting an arbitrarily fixed
 subsequence of $(v_n)$ again by $(v_n)$ we may assume (after taking a subsequence) that $u_n\to u$ a.e.\ in $\Omega_T$,
 which in turn implies that $\varphi'(u_n)\to \varphi'(u)$ a.e.\ in $\Omega_T$.
 Furthermore, since $(v_n)$ lies $K$, which is relatively compact in $L_2(\Omega_T)$ and bounded in the reflexive space $L_2((0,T);\oH^1_2(\Omega))$, we may assume (after taking
 a subsequence) that $v_n\to v$  in $L_2(\Omega_T)$ and $\nabla v_n \rightharpoonup \nabla v$  in $L_2(\Omega_T)$
 for some $v\in L_2((0,T);\oH^1_2(\Omega))$. Finally, $v_n=S(u_n)$ is equivalent to
\begin{equation} \label{Scont}
\int_{0}^{T} \int_\Omega \Big(-\eta_t \big(k\ast [v_n-u_0]\big)+
(A\varphi'(u_n)\nabla v_n| \nabla \eta)\Big)\,dx\,dt=\,\int_0^T \int_\Omega f\eta\,dx\,dt
\end{equation}
for any test function $\eta\in \oH^{1,1}_2(\Omega_T)$ with $\eta|_{t=T}=0$. Writing $A(t,x)=(a_{ij}(t,x))_{i,j=1,\ldots,d}$,
the dominated convergence theorem gives the convergence
\[
a_{ij}(t,x)\varphi'(u_n)\partial_{x_i}\eta \to a_{ij}(t,x)\varphi'(u)\partial_{x_i}\eta
\]
in $L_2(\Omega_T)$ for all $i,j=1,\ldots,d$, where we use the boundedness of $A$ and $\varphi'$. Combining this
with the weak convergence $\nabla v_n \rightharpoonup \nabla v$  in $L_2(\Omega_T)$ we see that
\[
\int_{0}^{T} \int_\Omega 
(A\varphi'(u_n)\nabla v_n| \nabla \eta)\,dx\,dt \to \int_{0}^{T} \int_\Omega 
(A\varphi'(u)\nabla v| \nabla \eta)\,dx\,dt\quad \mbox{as}\;n\to \infty.
\]
Since $u_0\in L_2(\Omega)$ and $v_n\to v$  in $L_2(\Omega_T)$, we also have that $k\ast (v_n-u_0)\to k\ast (v-u_0)$,
by Young's inequality for convolutions. Hence, sending $n\to \infty$ in \eqref{Scont} yields
\[
\int_{0}^{T} \int_\Omega \Big(-\eta_t \big(k\ast [v-u_0]\big)+
(A\varphi'(u)\nabla v| \nabla \eta)\Big)\,dx\,dt=\,\int_0^T \int_\Omega f\eta\,dx\,dt
 \] 
for any test function $\eta\in \oH^{1,1}_2(\Omega_T)$ with $\eta|_{t=T}=0$, which is equivalent to $v=S(u)$.
Hence $S$ is continuous so that we may apply Schauder's fixed point theorem to obtain existence of a $u\in W(T,u_0)$
such that $S(u)=u$. This $u$ is a weak solution of \eqref{mainproblem}.
\end{proof}
\section{$L_\infty$-bounds for weak solutions in the non-degenerate case} \label{boundednessSec}
The aim of this section is to derive $L_\infty$-bounds for weak solutions of the nonlinear problem 
\eqref{mainproblem} in the non-degenerate case, which, concerning lower bounds of $\varphi'$, do not depend
on the constant $c_0$ from (H$\varphi$) but only on a lower bound for $\varphi'(r)$ for sufficiently large $|r|$.
To achieve our goal we also have to strengthen the assumptions on the data and the kernel $k$.
We will now assume that $(k,l)\in \mathcal{PC}$ where the kernel $l\in L_p((0,T))$ for some $p>1$,
which is the case for $(k,l)=(g_{1-\alpha},g_\alpha)$ with $\alpha\in(0,1)$ and the pairs $(k,l)$ described in
Section \ref{IntroSec} (i),(ii). Fixing such a
$p$ we further assume that
\begin{itemize}
\item [{\bf (H'd)}] $\; u_0\in L_\infty(\Omega)$ and $f\in L_{q_1}((0,T);L_{q_2}(\Omega))$. 
where
\[
\frac{p'}{q_1}\,+\,\frac{d}{2q_2}\,=1-\beta,
\]
and
\[
\begin{array}{l@{\quad \mbox{for}\quad}l}
q_1\in \Big[\,\frac{p'}{(1-\beta)}\,,\infty\Big],\;
q_2\in \Big[\,\frac{d}{2(1-\beta)}\,,\infty\Big],\;\beta\in (0,1) & d\ge 2,\\
q_1\in
\Big[\,\frac{p'}{(1-\beta)}\,,\frac{2p'}{(1-2\beta)}\,\Big],\;
q_2\in [1,\infty],\;\beta\in \Big(0,\frac{1}{2}\Big) & d=1.
\end{array}
\]
\end{itemize}
The boundedness result reads as follows.
\begin{satz} \label{bddnessND}
Let $T>0$, $\Omega$ be a bounded domain in $\iR^d$, and $(k,l)\in \mathcal{PC}$ where the kernel $l\in L_p((0,T))$ for some $p>1$. Suppose that the assumptions (H'd), (HA), and (H$\varphi$) are satisfied. Let further $R>0$ and assume
in addition that
\begin{equation} \label{varphimu}
0<\mu\le \varphi'(r)\quad \mbox{for all}\;r\in \iR\;\mbox{with}\;|r|\ge R.
\end{equation}
Then any weak solution $u\in W(T,u_0)$ of problem \eqref{mainproblem} is essentially bounded in $\Omega_T$ and
\begin{equation}
|u|_{L_\infty(\Omega_T)}\le C\big(1+\max\{R,|u_0|_{L_\infty(\Omega)}\}\big)
\end{equation}
where $C=C(p,q_1,q_2,|l|_{L_p((0,T))},T,d,\mu,\Omega,|f|_{L_{q_1}((0,T);L_{q_2}(\Omega))})\ge 1$, in particular $C$ is 
independent of the constants $c_0$ and $c_1$ from (H$\varphi$).
\end{satz}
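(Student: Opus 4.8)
The plan is to run a De Giorgi iteration on truncations of the solution $u$ at levels $\ell_n$ increasing to some limit level $\ell_\infty$, and to show that the resulting sequence of ``energy'' quantities satisfies a nonlinear recursion of the form $Y_{n+1}\le C b^n Y_n^{1+\varepsilon}$ with $b>1$ and $\varepsilon>0$, which forces $Y_n\to 0$ provided $Y_0$ is small (a smallness that can always be arranged by taking $\ell_\infty$ large enough in terms of the data). Fix $\ell_0\ge \max\{R,|u_0|_{L_\infty(\Omega)}\}$ and set $\ell_n=\ell_0(2-2^{-n})$, so $\ell_n\uparrow 2\ell_0=:\ell_\infty$. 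For the level $\ell$ work with the positive part $u_\ell:=(u-\ell)_+$; the negative part is handled symmetrically. The test function will (formally) be $\eta=u_\ell$, made rigorous by using the Yosida-regularized operators $B_\gamma=\partial_t(k_\gamma\ast\cdot)$ from Section \ref{Prelim}, passing $\gamma\to\infty$, and exploiting that on $\{u>\ell\ge R\}$ one has $\varphi'(u)\ge\mu$.

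First I would derive the basic energy inequality. Testing the weak formulation (in Yosida-approximated form) with $u_\ell$ and using the ellipticity (HA) together with $\varphi'(u)\ge\mu$ on $\{u>\ell\}$ gives, for a.a.\ $t$,
\[
\tfrac12\,\partial_t\big(k\ast\|u_\ell(\cdot)\|_{L_2(\Omega)}^2\big)(t)+\mu\nu\|\nabla u_\ell(t)\|_{L_2(\Omega)}^2\le \int_\Omega f(t)\,u_\ell(t)\,dx,
\]
where the convexity inequality of Lemma \ref{FIconvex} (applied with $H(r)=r^2$ to the kernel $k_\gamma$, then $\gamma\to\infty$) is what lets us pull the quadratic inside $\partial_t(k\ast\cdot)$; one also uses that $u_\ell|_{t=0}=0$ since $\ell\ge|u_0|_{L_\infty}$, so the initial term drops. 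Convolving this inequality with the kernel $l$ and using $k\ast l=1$, $l\ge0$, and nonnegativity of the dissipation term, I obtain the two a priori bounds
\[
\sup_{t\in(0,T)}\|u_\ell(t)\|_{L_2(\Omega)}^2+\big\||\nabla u_\ell|\big\|_{L_2(\Omega_T)}^2\ \lesssim\ \big\|l\ast\langle f,u_\ell\rangle\big\|_{L_\infty(0,T)}\ \lesssim\ |l|_{L_p((0,T))}\,\big\|\,\|f\|_{L_{q_2}(\Omega)}\|u_\ell\|_{L_{q_2'}(\Omega)}\big\|_{L_{p'}(0,T)},
\]
the constant depending only on $\mu,\nu$. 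This is the point where the hypothesis $l\in L_p$ with $p>1$ is essential: it converts the time-nonlocal dissipation into a genuine $L^\infty_t L^2_x\cap L^2_t\dot H^1_x$ bound on the truncation.

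Next I would interpolate. By Gagliardo--Nirenberg/Sobolev (distinguishing $d\ge2$ and $d=1$ exactly as the exponent ranges in (H'd) are designed to accommodate) the energy norm $\|u_\ell\|_{L^\infty_tL^2_x}\cap\|\nabla u_\ell\|_{L^2_tL^2_x}$ controls a mixed norm $\|u_\ell\|_{L_{r_1}((0,T);L_{r_2}(\Omega))}$ with $r_1,r_2$ strictly larger than the ``scaling'' pair, the surplus being quantified by the parameter $\beta$; the relation $p'/q_1+d/(2q_2)=1-\beta$ in (H'd) is precisely the condition ensuring that the right-hand side above, estimated via H\"older in $(t,x)$ against $|\{u>\ell\}|$-type factors, closes with a power $>1$. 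Writing $A_n=\{(t,x):u(t,x)>\ell_n\}$ and $Y_n=\||u_{\ell_n}|\|$ in the chosen energy/interpolation norm, Chebyshev gives $(\ell_{n+1}-\ell_n)\,|A_{n+1}|^{1/\text{(exponent)}}\le \|u_{\ell_n}\|$ on $A_{n+1}\subset A_n$, and $\ell_{n+1}-\ell_n=\ell_0 2^{-(n+1)}$; feeding this back into the energy estimate for level $\ell_{n+1}$ yields the desired recursion $Y_{n+1}\le C\,4^{n}\,\ell_0^{-\delta}\,Y_n^{1+\varepsilon}$ for suitable $\delta,\varepsilon>0$ depending only on $p,q_1,q_2,d$.

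The main obstacle — and the part deserving care rather than the routine bookkeeping above — is twofold: \emph{(a)} justifying the test-function computation rigorously, since $u$ only has the regularity $u\in W(T,u_0)$ and $\partial_t(k\ast(u-u_0))\in L_2((0,T);H^{-1})$, so $u_\ell=(u-\ell)_+\in L_2((0,T);\oH^1_2(\Omega))$ is an admissible \emph{spatial} test function but the chain rule in time must be obtained through the Yosida approximation $k_\gamma$, Lemma \ref{FIconvex}, and a limit passage, checking that all convolution and truncation operations commute with $\gamma\to\infty$; and \emph{(b)} making the convolution-with-$l$ step and the ensuing H\"older estimates uniform in the level $\ell$, so that the constant $C$ in the final bound is genuinely independent of $c_0,c_1$ and depends on $\varphi$ only through $\mu$ and $R$. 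Once the recursion is in hand, the standard De Giorgi lemma (if $Y_0\le C^{-1/\varepsilon}b^{-1/\varepsilon^2}$ then $Y_n\to0$) applies: choosing $\ell_0$ a fixed multiple of $\max\{R,|u_0|_{L_\infty}\}$ plus a constant multiple of the data norm $|f|_{L_{q_1}L_{q_2}}$ makes $Y_0$ as small as required, and $Y_n\to0$ forces $u\le\ell_\infty$ a.e.; the symmetric argument on $(-u-\ell)_+$ gives the lower bound, yielding
\[
|u|_{L_\infty(\Omega_T)}\le C\big(1+\max\{R,|u_0|_{L_\infty(\Omega)}\}\big)
\]
with $C$ of the asserted dependence.
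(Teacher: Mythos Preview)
Your plan is correct and follows essentially the same route as the paper, which also Yosida-regularizes, tests with $(u-\kappa)_+$ for $\kappa\ge\max\{R,|u_0|_{L_\infty(\Omega)}\}$, applies the convexity inequality of Lemma~\ref{FIconvex}, convolves with $l$, and then defers the De~Giorgi iteration to \cite{Za}. One small correction: after convolving with $l$ the energy inequality reads $\tfrac12|u_\kappa^+(t)|_{L_2(\Omega)}^2+\mu\nu\,\big(l\ast|\nabla u_\kappa^+|_{L_2(\Omega)}^2\big)(t)\le \big(l\ast\!\int_\Omega f u_\kappa^+\,dx\big)(t)$ (cf.\ \eqref{GB1})---the gradient term still carries the convolution $l\ast$, not a plain $L_2(\Omega_T)$ norm---so the interpolation and recursion must be run with these $l$-weighted quantities as in \cite{Za}, which is exactly where the hypothesis $l\in L_p((0,T))$, $p>1$, is used.
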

\begin{proof}
Theorem \ref{bddnessND} can be proved by means of a straightforward adaption of the arguments given in \cite{Za}, 
see in particular the proof of 
\cite[Theorem 3.1]{Za}. First of all, by introducing the Yosida approximation
of the operator $\partial_t(k\ast \cdot)$, one can obtain an equivalent weak formulation for \eqref{mainproblem}
where the involved kernel is more regular than $k$, see \cite[Lemma 3.1]{Za} and Section \ref{BddWS} below. This time-regularization of the problem
allows to take the test functions $u_\kappa^+:=(u-\kappa)_+$, where $y_+$ stands for the positive part of $y\in \iR$ and $\kappa\ge \kappa_0:=\max\{R,|u_0|_{L_\infty(\Omega)}\}$. Proceeding as in the proof of \cite[Theorem 3.1]{Za}
one obtains the estimate 
\begin{equation} \label{GB1}
\frac{1}{2}\,\int_{\Omega} (u_\kappa^+)^2\,dx+l\ast \int_{\Omega}
(A\varphi'(u)\nabla u|\nabla u_\kappa^+)\,dx\le l\ast \int_{\Omega}f u_\kappa^+\,dx\quad\mbox{a.e. in}\; (0,T),
\end{equation}
c.f.\ \cite[formula (23)]{Za}. Using (HA) and \eqref{varphimu} the second term on the left-hand side of \eqref{GB1} can
be estimated as follows.
\begin{align*}
\big(l\ast \int_{\Omega} &
(A\varphi'(u) \nabla u|\nabla u_\kappa^+)\,dx\big)(t)\\
& =\int_0^t l(t-\tau) \int_{\{u(\tau,\cdot)>\kappa\}}
\big(A(\tau,x)\varphi'(u(\tau,x))\nabla u(\tau,x)\big|\nabla u(\tau,x)\big)\,dx\,d\tau\\
& \ge \nu\mu \int_0^t l(t-\tau) \int_{\{u(\tau,\cdot)>\kappa\}}\big|\nabla u(\tau,x)\big|^2\,dx\,d\tau\\
& = \nu \mu \big(l\ast \int_\Omega |\nabla u_\kappa^+|^2\,dx\big)(t),\quad \mbox{a.a.}\;t\in (0,T).
\end{align*}
Following then the line of arguments given in the proof of \cite[Theorem 3.1]{Za} after formula (24) one 
first derives the basic truncated energy estimate, which in turn, by means of De Giorgi's iteration technique, leads
to an upper bound for $u$ of the asserted form. The corresponding lower bound for $u$ can be obtained by testing
the problem with $(-u-\kappa)_+$, $\kappa\ge \kappa_0$, and arguing as before now with $-u$ in place of $u$.
\end{proof}
\section{Bounded weak solutions in the degenerate case} \label{BddWS}
We turn now to the degenerate case. As in the previous section we assume that
\begin{itemize}
\item [{\bf (Hk)}] $(k,l)\in \mathcal{PC}$ where the kernel $l\in L_p((0,T))$ for some $p>1$.
\end{itemize}
We further assume (H'd), (HA) and that the nonlinearity $\varphi$ satisfies the following conditions.
\begin{itemize}
\item [{\bf (H'$\varphi$)}] $\varphi\in C^1(\iR)$, $\varphi'(r)\ge 0$ for all $r\in \iR$, $\varphi(0)=0$, $\varphi$ is strictly increasing in a neighbourhood of $0$, and there exist $\mu, R>0$ such that
\[
0<\mu\le \varphi'(r)\quad \mbox{for all}\;r\in \iR\;\mbox{with}\;|r|\ge R.
\]
\end{itemize}
We define
\begin{equation} \label{primi}
\Phi(r)=\int_0^r \varphi(s)\,ds,\quad r\in \iR.
\end{equation}

Setting 
\begin{align*}
W_{\varphi}(T,u_0):=\{w\in L_2((0,T);L_2(\Omega)): &\;k\ast (w-u_0)\in {}_0H^{1}_{2} ((0,T);H^{-1}(\Omega))\;
\mbox{and}\\
& \;\varphi(w)\in L_2((0,T);\oH^1_2(\Omega))\},
\end{align*}
we say that $u\in W_{\varphi}(T,u_0)$ is a weak solution of the degenerate problem \eqref{mainproblem} if
the relation \eqref{weakformulation} holds true for any $\eta\in \oH^{1,1}_2(\Omega_T)$ 
with $\eta|_{t=T}=0$.

Our main result on bounded weak solutions is the following.
\begin{satz} \label{degensol}
Let $T>0$, $\Omega$ be a bounded domain in $\iR^d$, and suppose that the assumptions (Hk), (H'd), (HA) 
and (H'$\varphi$) are satisfied. Then 
the problem \eqref{mainproblem} possesses a weak solution $u\in W_{\varphi}(T,u_0)\cap L_\infty(\Omega_T)$ and
\begin{align}
|\partial_t [k\ast (u-u_0)]| {}_{L_{2} ((0,T);H^{-1}(\Omega))}+& |\varphi(u)|_{L_2((0,T);H^1_2(\Omega))} \nonumber\\
& \le M_1\big(|u_0|_{L_2(\Omega)}+|\Phi(u_0)|_{L_1(\Omega)}+|f|_{L_2(\Omega_T)}\big). \label{nonlinearest}
\end{align}
where the constant $M_1$ only depends on $\nu, d, |A|_{L_\infty(\Omega_T)}, |k|_{L_1((0,T))}, \Omega, T$. 
Moreover 
\begin{equation} \label{nonlinearbound}
|u|_{L_\infty(\Omega_T)}\le C\big(1+\max\{R,|u_0|_{L_\infty(\Omega)}\}\big)
\end{equation}
where $C=C(p,q_1,q_2,|l|_{L_p((0,T))},T,d,\mu,\Omega,|f|_{L_{q_1}((0,T);L_{q_2}(\Omega))})\ge 1$ is the same
constant as in Theorem \ref{bddnessND}.
\end{satz}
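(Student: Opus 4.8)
The plan is to approximate the degenerate nonlinearity $\varphi$ by a family of non-degenerate ones and pass to the limit using the compactness machinery of Section \ref{CompactnessSec}. For $\varepsilon\in(0,1)$ set $\varphi_\varepsilon(r)=\varphi(r)+\varepsilon r$, so that $\varphi_\varepsilon\in C^1(\iR)$ satisfies (H$\varphi$) with $c_0=\varepsilon$ and $c_1$ a fixed constant (note $\varphi'$ is bounded on $[-R,R]$ by continuity and bounded below by $\mu$ outside, hence $\varphi'$ is globally bounded above after possibly increasing the bound near $0$; more carefully one truncates $\varphi$ outside a large ball to make $\varphi'$ bounded, which does not affect the final solution once $L_\infty$ bounds are in place). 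By Theorem \ref{existenceND} there is a weak solution $u_\varepsilon\in W(T,u_0)$ of \eqref{mainproblem} with $\varphi$ replaced by $\varphi_\varepsilon$. Crucially, $\varphi_\varepsilon$ still satisfies the lower bound \eqref{varphimu} with the \emph{same} $\mu$ and $R$, so Theorem \ref{bddnessND} applies and gives $|u_\varepsilon|_{L_\infty(\Omega_T)}\le C(1+\max\{R,|u_0|_{L_\infty(\Omega)}\})=:K$, with $C$ independent of $\varepsilon$. This uniform $L_\infty$-bound is the key that makes the degenerate limit work.

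The next step is to derive $\varepsilon$-uniform energy estimates. Testing (formally, after the Yosida time-regularization as in Theorem \ref{bddnessND} and \cite[Lemma 3.1]{Za}) the $\varepsilon$-problem with $\eta=\varphi_\varepsilon(u_\varepsilon)$ and using the convexity inequality of Lemma \ref{FIconvex} with $H=\Phi_\varepsilon$, $\Phi_\varepsilon(r)=\int_0^r\varphi_\varepsilon$, together with (HA), one obtains a bound
\[
\Big|\,\partial_t\big(k\ast[u_\varepsilon-u_0]\big)\Big|_{L_2((0,T);H^{-1}(\Omega))}+\big|\nabla\varphi_\varepsilon(u_\varepsilon)\big|_{L_2(\Omega_T)}\le M_1\big(|u_0|_{L_2(\Omega)}+|\Phi(u_0)|_{L_1(\Omega)}+|f|_{L_2(\Omega_T)}\big),
\]
uniformly in $\varepsilon$, where $M_1$ depends only on the quantities listed in the statement (the $\varepsilon\|u_\varepsilon\|^2$ contributions are absorbed since they are nonnegative and the $\Phi_\varepsilon(u_0)\to\Phi(u_0)$ term is controlled using $|u_0|_\infty$). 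Since $|u_\varepsilon|\le K$ and $\varphi'$ is bounded on $[-K,K]$, the bound on $\nabla\varphi_\varepsilon(u_\varepsilon)$ transfers to a bound on $\varphi(u_\varepsilon)$ in $L_2((0,T);\oH^1_2(\Omega))$; also $u_\varepsilon$ is bounded in $L_2((0,T);L_2(\Omega))$ and $k\ast(u_\varepsilon-u_0)$ is bounded in ${}_0H^1_2((0,T);H^{-1}(\Omega))$.

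Now I pass to the limit. Apply Theorem \ref{comWVH2} with $V=L_2(\Omega)$, $H=H^{-1}(\Omega)$, $x=u_0$, $p=2$: the bounds just obtained put $u_\varepsilon$ into a fixed set $\tilde W_m$, hence along a subsequence $u_\varepsilon\to u$ in $L_2((0,T);H^{-1}(\Omega))$; combined with the uniform $L_\infty$-bound and interpolation this upgrades to $u_\varepsilon\to u$ in $L_2(\Omega_T)$ and (a further subsequence) a.e.\ in $\Omega_T$, so $u\in L_\infty(\Omega_T)$ with $|u|_\infty\le K$, giving \eqref{nonlinearbound}. By continuity of $\varphi$, $\varphi(u_\varepsilon)\to\varphi(u)$ a.e.\ and boundedly, hence strongly in $L_2(\Omega_T)$, while weak compactness in $L_2((0,T);\oH^1_2(\Omega))$ identifies the weak limit: $\varphi(u)\in L_2((0,T);\oH^1_2(\Omega))$ and $\nabla\varphi(u_\varepsilon)\rightharpoonup\nabla\varphi(u)$. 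One also has $\nabla\varphi_\varepsilon(u_\varepsilon)-\nabla\varphi(u_\varepsilon)=\varepsilon\nabla u_\varepsilon\to 0$ in $L_2(\Omega_T)$ (since $\varepsilon\nabla u_\varepsilon=\varepsilon^{1/2}\cdot\varepsilon^{1/2}\nabla u_\varepsilon$ and $\varepsilon^{1/2}\nabla u_\varepsilon$ is $L_2$-bounded from the energy estimate). Passing to the limit in the weak formulation \eqref{weakformulation} for $u_\varepsilon$ — the $A\nabla\varphi_\varepsilon(u_\varepsilon)$ term via $A\nabla\eta\in L_2$ against the weak convergence, the memory term $k\ast(u_\varepsilon-u_0)\to k\ast(u-u_0)$ by Young's inequality — shows $u$ is a weak solution in $W_\varphi(T,u_0)\cap L_\infty(\Omega_T)$, and lower semicontinuity of norms under weak convergence preserves the estimate \eqref{nonlinearest}.

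The main obstacle is making the formal energy estimate rigorous: the test function $\varphi_\varepsilon(u_\varepsilon)$ is not admissible for the weak formulation of the original (non-regularized) problem because $u_\varepsilon$ has only the low time-regularity $k\ast(u_\varepsilon-u_0)\in{}_0H^1_2((0,T);H^{-1}(\Omega))$, and the convexity inequality Lemma \ref{FIconvex} requires the \emph{regular} kernels $k_n$. This is handled exactly as in \cite{Za}: one replaces $\partial_t(k\ast\cdot)$ by its Yosida approximation $\partial_t(k_n\ast\cdot)$ (formula \eqref{Yos}), for which the equation holds pointwise in time and $\varphi_\varepsilon(u_\varepsilon)$ is a legitimate test function, applies Lemma \ref{FIconvex} with kernel $k_n$ (admissible since $k_n\in H^1_{1,loc}$, nonnegative, nonincreasing) and the chain-type inequality $\varphi_\varepsilon(u_\varepsilon)\partial_t(k_n\ast[u_\varepsilon-u_0])\ge\partial_t(k_n\ast[\Phi_\varepsilon(u_\varepsilon)-\Phi_\varepsilon(u_0)])$, convolves with $l$, integrates in $x$, and then lets $n\to\infty$ using $k_n\ast g\to k\ast g$ and $h_n\ast f\to f$ in the relevant spaces. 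One must also verify that the $\Phi(u_0)\in L_1(\Omega)$ term appears correctly — this is automatic from $u_0\in L_\infty(\Omega)$ and the growth of $\varphi$ on bounded sets. Everything else is a routine adaptation of the non-degenerate arguments already invoked above.
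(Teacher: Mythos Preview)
Your overall strategy---regularize $\varphi$ by adding $\varepsilon r$ (after a linear truncation outside a large ball), invoke Theorems \ref{existenceND} and \ref{bddnessND} for a uniform $L_\infty$-bound, derive uniform energy estimates by testing with $\varphi_\varepsilon(u_\varepsilon)$ via the Yosida-regularized formulation and Lemma \ref{FIconvex}, then use Theorem \ref{comWVH2} with $V=L_2(\Omega)$, $H=H^{-1}(\Omega)$---is exactly the paper's route. The construction of the approximate solutions, the $L_\infty$-bound, and the energy estimate are all fine.

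There is, however, a genuine gap in the identification of the limit. You claim that strong convergence $u_\varepsilon\to u$ in $L_2((0,T);H^{-1}(\Omega))$ together with the uniform $L_\infty$-bound ``upgrades by interpolation'' to strong convergence in $L_2(\Omega_T)$, and hence (along a subsequence) to a.e.\ convergence, after which $\varphi(u_\varepsilon)\to\varphi(u)$ follows by continuity. This upgrade is false: take $\Omega=(0,\pi)$ and $v_n(x)=\sin(nx)$; then $|v_n|_{L_\infty}=1$, $|v_n|_{H^{-1}(\Omega)}\to 0$, yet $|v_n|_{L_2(\Omega)}$ is a nonzero constant. There is no interpolation inequality between $H^{-1}$ and $L_\infty$ that produces $L_2$, and you have no uniform bound on $u_\varepsilon$ in any space better than $L_2$ in $x$ (the only uniform $H^1$-type bound is on $\varphi_\varepsilon(u_\varepsilon)$, not on $u_\varepsilon$, precisely because $\varphi$ degenerates). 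Consequently you cannot conclude a.e.\ convergence of $u_\varepsilon$, and the rest of your limit argument collapses.

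The paper closes this gap differently: it does \emph{not} attempt to obtain strong $L_2$-convergence of $u_n$. Instead, from $u_n\to u$ strongly in $L_2((0,T);H^{-1}(\Omega))$ and $v_n=\tilde\varphi_n(u_n)\rightharpoonup v$ weakly in $L_2((0,T);\oH^1_2(\Omega))$ one has $\int_{\Omega_T} v_n u_n\to\int_{\Omega_T} v u$ (duality pairing of a weakly convergent sequence in $L_2(H^1_0)$ against a strongly convergent one in $L_2(H^{-1})$). Combined with the monotonicity of $\tilde\varphi_n$, this is exactly the input for Minty's trick, which yields $v=\tilde\varphi(u)=\varphi(u)$ (the last equality because $|u|_{L_\infty}\le M$). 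You should replace your interpolation step by this monotonicity argument; everything else in your outline then goes through.
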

\begin{proof}
Let 
\[
M=C\big(1+\max\{R,|u_0|_{L_\infty(\Omega)}\}\big)\quad (>R)
\]
be the $L_\infty(\Omega_T)$-bound provided in 
Theorem \ref{bddnessND} for arbitrary weak solutions of \eqref{mainproblem} under the assumptions 
(Hk), (H'd), (HA), (H$\varphi$) and \eqref{varphimu}. Recall that $M$ does not depend on the constants $c_0$ and
$c_1$ appearing in (H$\varphi$). In order to be able to apply our results from the non-degenerate case we first define
\begin{equation} \label{Mphi}
\tilde{\varphi}(r)=\left\{
\begin{array}{l@{\;:\;}l}
\varphi(r) & r\in [-M,M], \\
\varphi(M)+\varphi'(M)(r-M) & r>M,\\
\varphi(-M)+\varphi'(-M)(r+M) & r<-M,
\end{array}
\right.
\end{equation}
and then perturb $\tilde{\varphi}$ by setting
\[
\tilde{\varphi}_n(r)=\tilde{\varphi}(r)+\frac{1}{n}\,r,\quad r\in \iR,\,n\in \iN.
\] 
Evidently, $\tilde{\varphi}_n\in C^1(\iR)$,
\[
\frac{1}{n}\le \tilde{\varphi}_n'(r)\le |\varphi'|_{L_\infty((-M,M))}+1,\quad r\in \iR,\,n\in \iN, 
\]
and
\begin{equation} \label{tildebound}
0<\mu\le\tilde{\varphi}_n'(r) \quad \mbox{for all}\;n\in \iN\;\mbox{and}\; r\in \iR\;\mbox{with}\;|r|\ge R.
\end{equation}

Invoking Theorem \ref{existenceND}, we see that for every $n\in \iN$ the perturbed (non-degenerate) problem
\begin{equation} \label{perturbedproblem}
\left\{
\begin{array}{r@{\;=\;}l@{\;}l}
\partial_t\big(k\ast[u-u_0]\big)-\mbox{div}\big(A(t,x)\nabla \tilde{\varphi}_n(u)\big) & f,\quad & (t,x)\in\Omega_T\\
u & 0,\quad & (t,x)\in \Gamma_T\\
u|_{t=0} & u_0,\; & x\in \Omega,
\end{array}
\right.
\end{equation}
possesses a weak solution $u_n\in W(T,u_0)$. Since $\tilde{\varphi}_n$ satisfies the assumptions of
Theorem \ref{bddnessND} with uniform bound \eqref{tildebound}, this theorem yields the uniform estimate
\begin{equation} \label{unbound}
|u_n|_{L_\infty(\Omega_T)}\le M,\quad n\in \iN.
\end{equation}
Note that as $\tilde{\varphi}$ and $\varphi$ coincide on $[-M,M]$, \eqref{unbound} implies that
\[
\tilde{\varphi}_n(u_n)={\varphi}_n(u_n)
\]
where we set
\[
{\varphi}_n(r)=\varphi(r)+\frac{1}{n}\,r,\quad r\in \iR,\,n\in \iN.
\]

Next, we want to derive a uniform estimate for $\nabla {\varphi}_n(u_n)$. To this end we use the time-regularized
version of the weak formulation of \eqref{perturbedproblem}, which says that $u_n \in W(T,u_0)$ solves 
\eqref{perturbedproblem} in the weak sense if and only if
\begin{equation} \label{regWF}
\int_\Omega \Big(\psi \partial_t (k_m\ast [u_n-u_0])+\big(h_m\ast [A\nabla {\varphi}_n(u_n)]\big|
\nabla \psi\big)\Big)\,dx=\int_\Omega (h_m\ast f)\psi\,dx
\end{equation}
a.e.\ in $(0,T)$, for any test function $\psi\in \oH^1_2(\Omega)$ and all $m\in \iN$. Here $k_m$ and $h_m$ are the
functions introduced in Section \ref{Prelim}.
The equivalence of the two weak formulations follows directly from \cite[Theorem 3.1]{Za} and its proof, noting that
the problem can be regarded as a linear problem with coefficient matrix $A\tilde{\varphi}_n'(u_n)$.

For $t\in (0,T)$ we take in \eqref{regWF} the admissible test function $\psi={\varphi}_n(u_n)$; recall that
$\varphi_n(0)=0$ and $\varphi_n'(u_n)$ is bounded in $\Omega_T$. Setting $v_n={\varphi}_n(u_n)$ we obtain
\begin{equation} \label{Est1}
\int_\Omega \Big(\varphi_n(u_n) \partial_t (k_m\ast [u_n-u_0])+\big(h_m\ast [A\nabla v_n]\big|
\nabla v_n\big)\Big)\,dx=\int_\Omega (h_m\ast f)v_n\,dx
\end{equation}
a.e.\ in $(0,T)$. 

We define
\[
\Phi_n(r)=\int_0^r \varphi_n(s)\,ds,\quad r\in \iR,\,n\in \iN.
\]
Then $\Phi_n'=\varphi_n$ and thus $\Phi_n''=\varphi_n'=\varphi'+\frac{1}{n}\ge 0$ showing convexity of $\Phi_n$.
The last property allows us to apply the convexity inequality \eqref{convexfundidentity}, which gives
\begin{equation} \label{Est2}
\varphi_n(u_n) \partial_t \big(k_m\ast [u_n-u_0]\big)\ge \partial_t \big(k_m\ast [\Phi_n(u_n)-\Phi_n(u_0)]\big)
\quad \mbox{a.e. in}\;\Omega_T.
\end{equation}
From \eqref{Est1} and \eqref{Est2} we deduce that
\begin{equation} \label{Est3}
\int_\Omega \partial_t \big(k_m\ast [\Phi_n(u_n)-\Phi_n(u_0)]\big)\,dx+\int_\Omega \big(h_m\ast [A\nabla v_n]\big|
\nabla v_n\big)\,dx\le \int_\Omega (h_m\ast f)v_n\,dx
\end{equation}
a.e.\ in $(0,T)$. Observe that
\[
k_m\ast   [\Phi_n(u_n)-\Phi_n(u_0)]\in {}_0H_1^1((0,T);L_\infty(\Omega)).
\]
From this and $k_m=k\ast h_m$ as well as $k\ast l=1$ it follows that
\begin{align*}
l\ast \partial_t \big(k_m\ast [\Phi_n(u_n)-\Phi_n(u_0)]\big) & =
\partial_t \big(l\ast k_m \ast [\Phi_n(u_n)-\Phi_n(u_0)]\big)\\
& = h_m\ast  [\Phi_n(u_n)-\Phi_n(u_0)].
\end{align*} 
We convolve \eqref{Est3} with the nonnegative kernel $l$ and use the last identity; sending then $m\to \infty$ and choosing an
appropriate subsequence, if necessary, we find that
\begin{equation} \label{Est4} 
\int_\Omega \big(\Phi_n(u_n)-\Phi_n(u_0)\big)\,dx+l\ast \int_\Omega \big(A\nabla v_n\big|\nabla v_n\big)\,dx
\le l\ast \int_\Omega f v_n\,dx
\end{equation}
a.e.\ in $(0,T)$. 

Next, observe that $\Phi_n(u_n)\ge 0$ a.e.\ in $\Omega_T$. Thus we may drop this term in the first integral in \eqref{Est4}.
Estimating further the second integral by using the uniform parabolicity condition for $A$ and convolving the resulting
inequality with $k$ we obtain
\[
\nu (1\ast \int_\Omega |\nabla v_n|^2\,dx)(t) \le (1\ast k)(t) \int_\Omega \Phi_n(u_0)\,dx+
(1\ast \int_\Omega f v_n\,dx)(t) 
\]  
for all $t\in [0,T]$. Evaluating at $t=T$ and employing H\"older's inequality as well as Poincar\'e's inequality
in the form
\[
|w|_{L_2(\Omega)}\le C_P |\nabla w|_{L_2(\Omega;\iR^d)},\quad w\in \oH^1_2(\Omega),
\] 
we deduce further that
\[
\nu \int_0^T \int_\Omega |\nabla v_n|^2\,dx\,dt\le |k|_{L_1((0,T))}\int_\Omega \Phi_n(u_0)\,dx+
C_P|f|_{L_2(\Omega_T)}|\nabla v_n|_{L_2(\Omega_T;\iR^d)},
\]
which in turn yields
\begin{equation} \label{Est5}
\int_0^T \int_\Omega |\nabla v_n|^2\,dx\,dt\le \,\frac{2}{\nu}\,|k|_{L_1((0,T))}\int_\Omega \Phi_n(u_0)\,dx
+\frac{C_P^2}{\nu^2}\,|f|_{L_2(\Omega_T)}^2,
\end{equation}
by Young's inequality.

Recalling \eqref{primi} we have 
\[
\Phi_n(r)=\Phi(r)+\frac{1}{2n}\,r^2,
\]
and therefore
\[
\int_\Omega \Phi_n(u_0)\,dx\le |\Phi(u_0)|_{L_1(\Omega)}+\frac{1}{2}\,|u_0|_{L_2(\Omega)}^2.
\]
Combining this and \eqref{Est5} yields a uniform bound
\begin{equation} \label{gradbound}
|\nabla v_n|_{L_2(\Omega_T;\iR^d)}\le C_0,\quad n\in \iN,
\end{equation}
where the constant $C_0$ only depends on the data of the problem, but not on $n$.

By virtue of the uniform estimates \eqref{unbound} and \eqref{gradbound} there is a subsequence of $(u_n)$,
again denoted by $(u_n)$, such that as $n\to \infty$,
\begin{equation} \label{unschwach}
u_n \rightharpoonup u\quad \mbox{in}\;L_2((0,T);L_2(\Omega))
\end{equation}
and
\begin{equation} \label{vnschwach}
v_n =\varphi_n(u_n)=\tilde{\varphi}_n(u_n) \rightharpoonup v \quad \mbox{in}\;L_2((0,T);\oH^1_2(\Omega))
\end{equation}
for some $u\in L_2((0,T);L_2(\Omega))$ and $v\in L_2((0,T);\oH^1_2(\Omega))$. Sending $n\to \infty$
in the weak formulation of \eqref{perturbedproblem}, which reads
\begin{equation} \label{weakformun}
\int_{0}^{T} \int_\Omega \Big(-\eta_t \big(k\ast [u_n-u_0]\big)+
(A\nabla v_n| \nabla \eta)\Big)\,dx\,dt=\,\int_0^T \int_\Omega f\eta\,dx\,dt
\end{equation}
for any test function $\eta\in \oH^{1,1}_2(\Omega_T)$ with $\eta|_{t=T}=0$, we obtain
\begin{equation} \label{limitgl}
\int_{0}^{T} \int_\Omega \Big(-\eta_t \big(k\ast [u-u_0]\big)+
(A\nabla v| \nabla \eta)\Big)\,dx\,dt=\,\int_0^T \int_\Omega f\eta\,dx\,dt
\end{equation}
for all $\eta\in \oH^{1,1}_2(\Omega_T)$ with $\eta|_{t=T}=0$.

It remains to show that $v=\varphi(u)$. To see this, note first that $f\in L_2(\Omega_T)$, the estimate 
\eqref{gradbound} and the equation \eqref{weakformun} imply that the sequence $(k\ast [u_n-u_0])$ is 
bounded in ${}_0H^{1}_{2} ((0,T);H^{-1}(\Omega))$. Since we also know that $u_n$ is bounded in 
$L_2((0,T);L_2(\Omega))$, we may apply Theorem \ref{comWVH2} with 
$p=2$, $V=\oH^1_2(\Omega)$, and $H=L_2(\Omega)$, which ensures that there is a subsequence of $(u_n)$,
again denoted by $(u_n)$, which converges (strongly) in $L_2((0,T);H^{-1}(\Omega))$. Consequently,
\begin{equation} \label{unstrong}
u_n \to u\quad \mbox{in}\;L_2((0,T);H^{-1}(\Omega)).
\end{equation}

The desired relation $v=\varphi(u)$ can now be seen by means of a well known argument based on Minty's lemma,
see e.g.\ \cite{Schw} or \cite[Section II.2]{Show}. For the reader's convenience we provide some details. Define the operator 
$S:\,L_2(\Omega_T)\rightarrow L_2(\Omega_T)$ by $S(w)=\tilde{\varphi}(w)$. Observe that $S$ indeed maps
$L_2(\Omega_T)$ into itself, since $\tilde{\varphi}(r)$ grows linearly for large $|r|$. Using the (global) Lipschitz
continuity of $\tilde{\varphi}$ it is also easy to see that $S$ is continuous. Further, $S$ is monotone by monotonicity of $\tilde{\varphi}$. Employing the monotonicity of $\tilde{\varphi}_n$,
for any test function $w:\,\overline{\Omega_T}\rightarrow \iR$ we have
\begin{align*}
0 & \le \int_{\Omega_T}\big(v_n-\tilde{\varphi}_n(w)\big)\big(u_n-w\big)\,dx\,dt\\
 & = \int_{\Omega_T} \big(v_n u_n-\tilde{\varphi}_n(w) u_n-v_n w+\tilde{\varphi}_n(w) w\big)\,dx\,dt\\
 & \to \int_{\Omega_T} \big(v u-\tilde{\varphi}(w) u-v w+\tilde{\varphi}(w) w\big)\,dx\,dt
 = \int_{\Omega_T}\big(v-S(w)\big)\big(u-w\big)\,dx\,dt.
\end{align*}
In fact, the product term converges due to \eqref{vnschwach} and \eqref{unstrong}. For the other terms one uses \eqref{unschwach}
and \eqref{vnschwach} and the fact that $\tilde{\varphi}_n$ converges to $\tilde{\varphi}$ on any compact subset of $\iR$.
The obtained inequality for $S$ extends to arbitrary $w\in L_2(\Omega_T)$, by approximation. Minty's lemma then
shows that $v=S(u)=\tilde{\varphi}(u)$. Finally, \eqref{unbound} and \eqref{unschwach} imply that 
$u\in L_\infty(\Omega_T)$ with $|u|_{L_\infty(\Omega_T)}\le M$, whence $v=\varphi(u)$ as $\tilde{\varphi}=
\varphi$ on $[-M,M]$.
\end{proof}
\section{$L_1$-contraction estimate and uniqueness} \label{L1C}
In this section we show that an $L_1$-contraction estimate holds provided the solutions are sufficiently regular. 

Suppose that the assumptions (HA) and (Hd) are satisfied and that $u\in W_{\varphi}(T,u_0)$ is a weak solution of
\eqref{mainproblem}, which means that for any $\eta\in \oH^{1,1}_2(\Omega_T)$
with $\eta|_{t=T}=0$,
\begin{equation} \label{weakformulation2}
\int_{0}^{T} \int_\Omega \Big(-\eta_t \big(k\ast [u-u_0]\big)+
(A\nabla \varphi(u)|\nabla \eta)\Big)\,dx\,dt=\,\int_0^T \int_\Omega f\eta\,dx\,dt.
\end{equation}
Now suppose that $u$ enjoys the additional regularity property
\begin{equation} \label{addreg}
k\ast (u-u_0)\in {}_0H^1_1((0,T);L_1(\Omega)).
\end{equation}
Restricting to bounded test functions $\eta$ in the described class, we may integrate by parts w.r.t.\ time in the
first integral in \eqref{weakformulation2}, thereby obtaining that
\begin{equation} \label{weakformulation3}
\int_{0}^{T} \int_\Omega \Big(\eta \partial_t\big(k\ast [u-u_0]\big)+
(A\nabla \varphi(u)|\nabla \eta)\Big)\,dx\,dt=\,\int_0^T \int_\Omega f\eta\,dx\,dt.
\end{equation}
Observe that \eqref{weakformulation3} does not require any time regularity of $\eta$. Hence, by means of an approximation argument, one can show that under the additional condition \eqref{addreg}, the identity 
\eqref{weakformulation3} is satisfied for all $\eta\in L_2((0,T);\oH^1_2(\Omega))\cap L_\infty(\Omega_T)$.
Evidently, by means of a cut-off function argument, this is equivalent to
\begin{equation} \label{weakformulation4}
\int_{0}^{t_1} \int_\Omega \Big(\eta \partial_t\big(k\ast [u-u_0]\big)+
(A\nabla \varphi(u)|\nabla \eta)\Big)\,dx\,dt=\,\int_0^{t_1} \int_\Omega f\eta\,dx\,dt,\;\;t_1\in (0,T],
\end{equation}
for all $\eta\in L_2((0,T);\oH^1_2(\Omega))\cap L_\infty(\Omega_T)$.
Using this formulation of \eqref{mainproblem} one can also reduce the requirements on $u$, $u_0$ and $f$.
In fact, it is enough to assume that $u_0\in L_1(\Omega)$, $f\in L_1(\Omega_T)$, and $u$ belongs to the set
\begin{align*}
W_{\varphi,1}(T,u_0):=\{w\in L_1(\Omega_T):\,& \,k\ast (w-u_0)\in {}_0H^1_1((0,T);L_1(\Omega))\;\;
\mbox{and}\\
& \varphi(w)\in L_2((0,T);\oH^1_2(\Omega))\}.
\end{align*}
\begin{satz}
\label{L1ConTheorem}
Let $T>0$, $\Omega$ be a bounded domain in $\iR^d$, and $(k,l)\in \mathcal{PC}$. Suppose that
(HA) is satisfied and that $\varphi\in C^1(\iR)$ is strictly increasing in $\iR$. Let further
$u_{0,i}\in L_1(\Omega)$, $f_i\in L_1(\Omega_T)$, $i=1,2$, and assume that $u_i\in 
W_{\varphi,1}(T,u_{0,i})$ is a solution
of problem \eqref{mainproblem} with $u_0=u_{0,i}$ and $f=f_i$ in the sense that \eqref{weakformulation4}
holds true for all $\eta\in L_2((0,T);\oH^1_2(\Omega))\cap L_\infty(\Omega_T)$.
Then
\begin{equation} \label{ContractionEstimate}
|u_1-u_2|_{L_1(\Omega_T)}\le T |u_{0,1}-u_{0,2}|_{L_1(\Omega)}+|l|_{L_1((0,T))}.
\end{equation}
\end{satz}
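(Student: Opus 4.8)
The plan is to run the standard $L_1$-contraction argument --- testing the subtracted equation with a smooth approximation of the sign function --- adapted to the non-local time operator. The structural point is that the sign has to be taken of $\varphi(u_1)-\varphi(u_2)$ in order to keep the diffusion term nonnegative despite the degeneracy of $\varphi$, whereas the convexity inequality of Lemma \ref{FIconvex} applies to the state $u_1-u_2$; the two are reconciled in the limit by strict monotonicity of $\varphi$. Write $w=u_1-u_2$, $z=\varphi(u_1)-\varphi(u_2)$, $w_0=u_{0,1}-u_{0,2}$, $g=f_1-f_2$ and $v:=\partial_t(k\ast[w-w_0])$. By the definition of $W_{\varphi,1}$ one has $z\in L_2((0,T);\oH^1_2(\Omega))$, $w-w_0\in D(B)$ for the operator $B$ of Section \ref{Prelim} with $X=L_1(\Omega)$ and $p=1$, and hence $v\in L_1(\Omega_T)$. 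Subtracting the two instances of \eqref{weakformulation4} gives, for all $\eta\in L_2((0,T);\oH^1_2(\Omega))\cap L_\infty(\Omega_T)$ and all $t_1\in(0,T]$,
\[
\int_0^{t_1}\!\int_\Omega\Big(\eta\,v+(A\nabla z|\nabla\eta)\Big)\,dx\,dt=\int_0^{t_1}\!\int_\Omega g\,\eta\,dx\,dt.
\]

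First I would insert $\eta=H_\varepsilon(z)$, where $H_\varepsilon\in C^1(\iR)$ is odd and nondecreasing with $H_\varepsilon(0)=0$, $|H_\varepsilon|\le1$ and $H_\varepsilon\to\operatorname{sign}$ pointwise as $\varepsilon\to0$; this is admissible since $z$ is $\oH^1_2(\Omega)$-valued and $H_\varepsilon$ is Lipschitz with $H_\varepsilon(0)=0$. By (HA) the diffusion term equals $\int H_\varepsilon'(z)(A\nabla z|\nabla z)\,dx\,dt\ge0$ and may be dropped, while $|H_\varepsilon|\le1$ bounds the right-hand side by $|g|_{L_1(\Omega_T)}$, so $\int_0^{t_1}\!\int_\Omega H_\varepsilon(z)\,v\le|g|_{L_1(\Omega_T)}$. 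Now let $\varepsilon\to0$: since $\varphi$ is strictly increasing, $\operatorname{sign}(z)=\operatorname{sign}(w)$ pointwise (with $\operatorname{sign}(0)=0$), so $H_\varepsilon(z)\to\operatorname{sign}(w)$ pointwise, and as $|H_\varepsilon(z)|\le1$ and $v\in L_1(\Omega_T)$ dominated convergence gives $\int_0^{t_1}\!\int_\Omega\operatorname{sign}(w)\,v\,dx\,dt\le|g|_{L_1(\Omega_T)}$ for all $t_1\in(0,T]$. It is precisely the extra regularity \eqref{addreg} (yielding $v\in L_1$) that makes this passage legitimate, and strict monotonicity of $\varphi$ that makes the limit land on $\operatorname{sign}(w)$.

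Next I would bound the left side from below. Let $j_\delta\in C^1(\iR)$ be convex and even with $j_\delta(0)=j_\delta'(0)=0$, $0\le j_\delta(r)\le|r|+\delta$, $|j_\delta'|\le1$ and $j_\delta'\to\operatorname{sign}$ pointwise. For fixed $n$, Lemma \ref{FIconvex} with the admissible kernel $k_n$, $H=j_\delta$, initial value $w_0(x)$ and state $t\mapsto w(t,x)$ (a.e.\ $x$) gives $j_\delta'(w)\,\partial_t(k_n\ast[w-w_0])\ge\partial_t\big(k_n\ast[j_\delta(w)-j_\delta(w_0)]\big)$ a.e.; integrating over $\Omega\times(0,t_1)$ and using Fubini and $(k_n\ast(\cdot))|_{t=0}=0$ yields $\int_0^{t_1}\!\int_\Omega j_\delta'(w)\,\partial_t(k_n\ast[w-w_0])\ge(k_n\ast[q_\delta-q_{0,\delta}])(t_1)$, where $q_\delta(t)=\int_\Omega j_\delta(w(t,\cdot))\,dx$, $q_{0,\delta}=\int_\Omega j_\delta(w_0)\,dx$. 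Letting $\delta\to0$ (dominated convergence on the left, as $\partial_t(k_n\ast[w-w_0])\in L_1(\Omega_T)$; $q_\delta\to q$, $q_{0,\delta}\to q_0$ in $L_1(0,T)$ together with $k_n\in L_\infty(0,T)$ on the right, where $q(t):=|w(t)|_{L_1(\Omega)}$, $q_0:=|w_0|_{L_1(\Omega)}$), and then $n\to\infty$ (using $\partial_t(k_n\ast[w-w_0])=B_n(w-w_0)\to v$ in $L_1(\Omega_T)$ since $w-w_0\in D(B)$, and $k_n\ast[q-q_0]=k\ast(h_n\ast[q-q_0])\to k\ast[q-q_0]$ in $L_1(0,T)$ by Young's inequality and $h_n\ast\psi\to\psi$), I obtain $\int_0^{t_1}\!\int_\Omega\operatorname{sign}(w)\,v\,dx\,dt\ge(k\ast[q-q_0])(t_1)$ for a.e.\ $t_1\in(0,T)$. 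Combining with the first step, $(k\ast[q-q_0])(t_1)\le|g|_{L_1(\Omega_T)}$ a.e.; convolving with the nonnegative kernel $l$ and using $l\ast k=1$ gives $\int_0^{t_1}q\,ds-t_1q_0=(1\ast[q-q_0])(t_1)\le|g|_{L_1(\Omega_T)}(1\ast l)(t_1)$, and at $t_1=T$ this reads $|w|_{L_1(\Omega_T)}\le T|w_0|_{L_1(\Omega)}+|l|_{L_1((0,T))}|f_1-f_2|_{L_1(\Omega_T)}$, which is \eqref{ContractionEstimate}.

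The main obstacle I anticipate is the lower bound of the memory term in the third step: since $k$ is typically singular at the origin and merely in $L_{1,loc}$, Lemma \ref{FIconvex} is not directly applicable to $k$, so one must work with the regularized kernels $k_n$ and carefully justify that the two successive limits ($\delta\to0$, then $n\to\infty$) commute with the time-convolution and with integration in $x$ --- this is where essentially all the technical effort sits. By contrast, the usual difficulty of degenerate problems (the variable $\varphi(u_1)-\varphi(u_2)$ living in the diffusion term while the memory term sees $u_1-u_2$) is here bypassed rather cheaply, with \emph{no} doubling of variables: the assumed regularity \eqref{addreg} lets one send $\varepsilon\to0$ in the sign-approximation before Lemma \ref{FIconvex} is invoked, and strict monotonicity of $\varphi$ guarantees that the limiting integrand is exactly $\operatorname{sign}(w)\,v$, to which the convexity inequality for the state $w$ applies.
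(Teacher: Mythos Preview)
Your argument is correct and follows essentially the same route as the paper: test the difference of the equations with a smooth approximation of $\operatorname{sign}(\varphi(u_1)-\varphi(u_2))$ so that (HA) kills the elliptic term, use strict monotonicity of $\varphi$ to identify this with $\operatorname{sign}(u_1-u_2)$, apply the convexity inequality (Lemma \ref{FIconvex}) with the regularized kernels $k_n$, pass to the limit in the Yosida approximation, and finally convolve with $l$. The only organisational difference is that the paper uses a \emph{single} smooth convex approximation $H_\varepsilon(y)=\sqrt{y^2+\varepsilon^2}-\varepsilon$ (so that $H_\varepsilon'\to\operatorname{sign}$ and $H_\varepsilon\to|\cdot|$ simultaneously), writes $H_\varepsilon'(v)\,\partial_t(k\ast[u-u_0])=H_\varepsilon'(u)\,\partial_t(k_m\ast[u-u_0])+(\text{two correction terms})$, and sends $m\to\infty$ \emph{before} $\varepsilon\to0$; the mismatch between $H_\varepsilon'(v)$ and $H_\varepsilon'(u)$ is then absorbed into an error term $I_\varepsilon$ that vanishes in the limit by strict monotonicity. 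You instead take $\varepsilon\to0$ first and then introduce a second approximation $j_\delta$ of $|\cdot|$ to run the convexity step. Both orderings work for the same reason --- the extra regularity \eqref{addreg} puts $\partial_t(k\ast[w-w_0])$ in $L_1(\Omega_T)$, so dominated convergence handles all the pointwise sign limits --- and yield the same estimate (with the factor $|f_1-f_2|_{L_1(\Omega_T)}$ on the last term, as you write).
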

\begin{proof}
Suppose that $u_j$ is a solution of the problem under study with data $u_{0,j}$ and $f_j$, $j=1,2$. That is,
\begin{equation} \label{mainproblemJ}
\left\{
\begin{array}{r@{\;=\;}l@{\;}l}
\partial_t\big(k\ast[u_j-u_{0,j}]\big)-\mbox{div}\big(A(t,x)\nabla \varphi(u_j)\big) & f_j,\quad & (t,x)\in \Omega_T\\
u_j & 0,\quad & (t,x)\in \Gamma_T\\
u_j|_{t=0} & u_{0,j},\; & x\in \Omega.
\end{array}
\right.
\end{equation}
We set $v_j=\varphi(u_j)$, $j=1,2$, and
\[
u=u_1-u_2,\quad u_0=u_{0,1}-u_{0,2},\quad v=v_1-v_2, \quad f=f_1-f_2.
\]
For $\varepsilon>0$ we introduce the $C^\infty(\iR)$-function
\[
H_\varepsilon(y)=\sqrt{y^2+\varepsilon^2}-\varepsilon,\quad y\in \iR.
\]
Note that
\[
H_\varepsilon'(y)=\frac{y}{\sqrt{y^2+\varepsilon^2}},\quad H_\varepsilon''(y)=\frac{\varepsilon^2}{(y^2+\varepsilon^2)^{3/2}},
\quad y\in \iR,
\]
in particular $H_\varepsilon$ is convex, and $H_\varepsilon'$ as well as $H_\varepsilon''$ are bounded.

Observe that $H_\varepsilon'(v)\in L_2((0,T);\oH^1_2(\Omega))\cap L_\infty(\Omega_T)$, thus we may use $\eta=H_\varepsilon'(v)$
as test function in the weak formulation of the problem for both $u_1$ and $u_2$ (in the sense described in Theorem 
\ref{L1ConTheorem}). Taking the difference of the resulting identities we obtain
\[
\int_0^{t_1} \int_\Omega H_\varepsilon'(v) \partial_t\big(k\ast[u-u_0]\big)\,dx\,dt+
\int_0^{t_1}\int_\Omega\big(A\nabla v|H_\epsilon''(v)\nabla v \big)\,dx,\,dt =
\int_0^{t_1}\int_\Omega f H_\varepsilon'(v)\,dx\,dt,
\]
for all $t_1\in (0,T]$.
Since $H_\varepsilon$ is convex and by (HA), the second term on the left-hand side is nonnegative. We further have
$|H_\varepsilon'(y)|\le 1$ for all $y\in \iR$ and thus
\[
\int_0^{t_1}\int_\Omega f H_\varepsilon'(v)\,dx\,dt \le \int_0^{t_1} \int_\Omega |f|\,dx\,dt,\quad t_1\in (0,T].
\]
Consequently,
\begin{equation} \label{Cont1}
\int_0^{t_1}\int_\Omega H_\varepsilon'(v) \partial_t\big(k\ast[u-u_0]\big)\,dx\,dt \le \int_0^{t_1} \int_\Omega |f|\,dx\,dt,
\quad t_1\in (0,T].
\end{equation}
We now write
\begin{align*}
\int_0^{t_1} \int_\Omega H_\varepsilon'(v) \partial_t\big(k\ast&\, [u-u_0]\big)\,dx\,dt 
 =\int_0^{t_1} \int_\Omega H_\varepsilon'(u) \partial_t\big(k_m\ast[u-u_0]\big)\,dx\,dt\\
 & + \int_0^{t_1} \int_\Omega H_\varepsilon'(u)
 \Big(\partial_t\big(k\ast[u-u_0]\big)- \partial_t\big(k_m\ast[u-u_0]\big)\Big)\,dx\,dt\\
&  +\int_0^{t_1}\int_\Omega \big(H_\varepsilon'(v)- H_\varepsilon'(u)\big) \partial_t\big(k\ast[u-u_0]\big)\,dx\,dt,
\end{align*}
where the kernel $k_m$ ($m\in \iN$) is given by \eqref{kndef}.
Using again convexity of $H_\varepsilon$ we may apply the convexity inequality \eqref{convexfundidentity} to get
\begin{align*}
\int_0^{t_1}\int_\Omega H_\varepsilon'(u) \partial_t\big(k_m\ast[u-u_0]\big)\,dx\,dt & \ge \int_0^{t_1} \int_\Omega 
\partial_t\big(k_m\ast\big[H_\varepsilon(u)-H_\varepsilon(u_0)\big]\big)\,dx\,dt\\
& = \int_\Omega 
k_m\ast\big[H_\varepsilon(u)-H_\varepsilon(u_0)\big](t_1,x)\,dx,\quad t_1\in (0,T].
\end{align*}
From \eqref{Cont1} and the previous considerations, together with $|H_\varepsilon'(y)|\le 1$ for all $y\in \iR$, we infer that
\begin{equation} \label{Contr3}
\big(k_m\ast \int_\Omega\big(H_\varepsilon(u)-H_\varepsilon(u_0)\big)\,dx\big)(t) \le 
\int_0^T \int_\Omega |f|\,dx\,d\tau+I_\varepsilon+\tilde{I}_m,\quad t\in (0,T],
\end{equation}
where
\[
I_\varepsilon=\int_0^T \int_\Omega \big|H_\varepsilon'(v)- H_\varepsilon'(u)\big|\, \big|\partial_{\tau}\big(k\ast[u-u_0]\big|\,dx\,d\tau
\]
and
\[
\tilde{I}_m=\int_0^{T} \int_\Omega 
 \Big|\partial_{\tau}\big(k\ast[u-u_0]\big)- \partial_{\tau}\big(k_m\ast[u-u_0]\big)\Big|\,dx\,d\tau.
\]

Recall that $k_m=k\ast h_m$ and that for any $g\in L_1((0,T))$, $h_m\ast g\to g$ in $L_1((0,T))$ as $m\to \infty$.
Since $g:=k\ast \int_\Omega (H_\varepsilon(u)-H_\varepsilon(u_0))\,dx\in  L_1((0,T))$, it follows that the left-hand side
in \eqref{Contr3} tends to $g$ in $L_1((0,T))$, and also pointwise a.e.\ in $(0,T)$ if we choose an appropriate subsequence if 
necessary. Further, we know that $k\ast (u-u_0)\in  {}_0H^1_1((0,T);L_1(\Omega))$, which means that $u-u_0$ lies in
the domain $D(B)$ of the $m$-accretive operator $B:=\partial_t(k\ast \cdot)$ in $L_1((0,T);L_1(\Omega))$ (see Section \ref{Prelim}). Hence $\tilde{I}_m\to 0$
as $m\to \infty$, since $\partial_t(k_m\ast \cdot)$ is the Yosida approximation of $B$ (cf.\ Section \ref{Prelim}).
Consequently we obtain
\begin{equation} \label{Contr4}
\big(k\ast \int_\Omega\big(H_\varepsilon(u)-H_\varepsilon(u_0)\big)\,dx\big)(t) \le 
\int_0^T \int_\Omega |f|\,dx\,d\tau+I_\varepsilon,\quad \mbox{a.e.}\;t\in (0,T).
\end{equation}
Convolving \eqref{Contr4} with the kernel $l$ and evaluating at $t=T$ gives
\begin{equation} \label{Contr5}
\int_0^T \int_\Omega H_\varepsilon(u)\,dx\,dt\le T \int_\Omega H_\varepsilon(u_0)\,dx+|l|_{L_1((0,T))} \int_0^T \int_\Omega |f|\,dx\,d\tau+|l|_{L_1((0,T))} I_\varepsilon.
\end{equation}

Finally, we send $\varepsilon\to 0$ in \eqref{Contr5}. Since $H_\varepsilon(y)\to |y|$ and $H_\varepsilon(y)\le |y|$ 
for all $y\in \iR$, we obtain by dominated convergence
that the first term in \eqref{Contr5} tends to $|u|_{L_1(\Omega_T)}$ as $\varepsilon\to 0$. Analogously,
\[
\int_\Omega H_\varepsilon(u_0)\,dx \to |u_0|_{L_1(\Omega)}.
\]
Concerning the last term, we use that $|H'_\varepsilon(y)|\le 1$ and $H'_\varepsilon(y)\to \mbox{sign}(y)$, $y\in \iR$, with sign$(0)=0$. Since $\partial_t\big(k\ast[u-u_0])$ is integrable on $\Omega_T$, it follows by dominated convergence that
\[
I_\varepsilon \to \int_0^T \int_\Omega \big|\mbox{sign}\big(\varphi(u_1)-\varphi(u_2)\big)-\mbox{sign}(u_1-u_2)\big|\, \big|\partial_t\big(k\ast[u-u_0]\big)\big|\,dx\,dt
=0,
\]
since $\varphi$ is strictly increasing. This yields the desired contraction estimate \eqref{ContractionEstimate}.
\end{proof}

$\mbox{}$
{\footnotesize

$\mbox{}$

\noindent {\bf Petra Wittbold}, Fakult\"at f\"ur Mathematik, Universit\"at Duisburg-Essen, Thea-Leymann-Str.\ 9,
45127 Essen, Germany,\\
 E-mail: {\em petra.wittbold@uni-due.de}

\medskip

\noindent {\bf Patryk Wolejko}, Institut f\"ur Angewandte Analysis, Universit\"at Ulm, 89069 Ulm, Germany, \\E-mail: 
{\em patrykwolejko@hotmail.com}

\medskip

\noindent {\bf Rico Zacher}, Institut f\"ur Angewandte Analysis, Universit\"at Ulm, 89069 Ulm, Germany, \\
E-mail: {\em rico.zacher@uni-ulm.de}

}

\end{document}